\tikzstyle{vertex}=[circle, draw, inner sep=0pt, minimum size=6pt]
\newcommand{\vertex}{\node[vertex]}
\newtheorem{theorem}{Theorem}[section]
\newtheorem{lemma}{Lemma}[section]
\newtheorem{proposition}{Proposition}[section]
\newtheorem{corollary}{Corollary}[section]
\theoremstyle{definition}
\newtheorem{definition}[theorem]{Definition}
\newtheorem{example}[theorem]{Example}
\newtheorem{remark}[theorem]{Remark}
\newtheorem{construction}[theorem]{Construction}
\DeclareMathAlphabet{\mathcal}{OMS}{cmsy}{m}{n} 
\begin{document}

\pagestyle{plain}
\title{\bf BIPARTITE GRAPHS as POLYNOMIALS, and POLYNOMIALS AS BIPARTITE GRAPHS \\
(WITH A VIEW TOWARDS DIVIDING IN $\mathbb{N}[x]$, $\mathbb{N}[x,y]$)}

\maketitle
\begin{center}
Andrey Grinblat\footnote{\texttt{expandrey@mail.ru}} and Viktor Lopatkin\footnote{\texttt{wickktor@gmail.com}, please use this email for contacting.}
\end{center}

\begin{abstract}
  The aim of this paper is to show that any finite undirected bipartite graph can be considered as a polynomial $p  \in \mathbb{N}[x]$, and any directed finite bipartite graph can be considered as a polynomial $p\in\mathbb{N}[x,y]$, and vise verse. We also show that the multiplication in semirings $\mathbb{N}[x]$, $\mathbb{N}[x,y]$ correspondences to a operations of the corresponding graphs which looks like a ``perturbed'' products of graphs. As an application, we give a new point of view to dividing in semirings $\mathbb{N}[x]$, $\mathbb{N}[x,y]$. Finally, we endow the set of all bipartite graphs with the Zariski topology.
\medskip

\textbf{Mathematics Subject Classifications}: 05C20, 05C25, 05C76, 11R09, 16Y60, 68W10, 05C25, 12Y05, 05C31, 13F20, 13B25

\textbf{Key words}: Bipartite graphs; dividing in semirings; Petri nets; polynomials; Winskel's morphisms; the Zariski topology.
\end{abstract}

\section*{Introduction}

In this paper we show that any finite bipartite graph can be considered as a polynomial $p(x) \in \mathbb{N}[x]$, and any finite directed bipartite graph can be considered as a polynomial $p(x,y) \in \mathbb{N}[x,y]$ and vise versa. To be more precisely, we consider any (directed) bipartite graph $\Gamma = (U,V,E)$ with an injection map $\varphi:V \to \mathbb{N}$ on the set $V$ of its vertices, \textit{i.e.,} we label every vertex $v\in V$ by some natural number. For every pair $(\Gamma, \varphi)$ we construct (see Constructions \ref{congraphpol}) a polynomial $p(N,\varphi) \in \mathbb{N}[x]$ and a polynomial $p(\Gamma,\varphi) \in \mathbb{N}[x,y]$ (see Construction \ref{construction1})) if $\Gamma$ is directed. Next, we present an inverse procedure (see Construction \ref{conpolgraph}), namely, for any polynomial $p(x) \in \mathbb{N}[x]$ we construct a bipartite graph $\Gamma(p) = (U,V,E)$ with a ``natural'' injection $\iota: V \to \mathbb{N}$ and we prove that $\Gamma \cong \Gamma(p(\Gamma,\varphi))$ (see Proposition \ref{=}). Furthermore, we show that for every two polynomials $p_1,p_2 \in \mathbb{N}[x]$ their multiplication $p_1\cdot p_2$ correspondences to a an operation of the corresponding graphs $\Gamma(p_1), \Gamma(p_2)$, we call  it \textit{polynomial product} and we denote it by $\Gamma_1\times_{\varphi_1,\varphi_2}\Gamma_2$. A graph $\Gamma_1\times_{\varphi_1,\varphi_2}\Gamma_2$ looks like the product $\Gamma_1\times \Gamma_2$ ``perturbed'' by $\mathrm{Im}(\varphi_1)\cap \mathrm{Im}(\varphi_2)$ (see Definition \ref{polynprodofgraph}, Proposition \ref{product}). Further, we shall also see that the sum $p+q$ correspondences to a graph $\Gamma$ that can be obtained from $\Gamma(p_1)$ and $\Gamma(p_2)$ by ``attaching to each other'' by its natural injections (see Definition \ref{sumofgraph} and Proposition \ref{propsumofgraph}). Similar results for directed bipartite graphs are obtained in Sections 4,5.

Finally, all of this enables us to consider dividing in semirings $\mathbb{N}[x]$, $\mathbb{N}[x,y]$ via (directed) bipartite graph point of view (see Section 6 and Example \ref{ex}), and to introduce the Zariski topology on the set of all finite (directed) bipartite graphs. With respect to this topology, in particular, we get an one-to-one correspondence between the set of all prime ideals of $\mathbb{N}[x]$ (\textit{resp.} $\mathbb{N}[x,y]$) and the set of all irreducible bipartite graphs (Theorem \ref{criteriafordiv}).

\vspace{2ex}

{\bf Acknowledgements:} the authors would like to express their deepest gratitude to {\sc Prof. Ernst W. Mayr}, who has drawn the author's attention to the graph point of view of some Petri nets problems studied by authors before this work, and for having kindly clarified some very important details. Special thanks are due to {\sc Prof. Glynn Winskel} for useful discussions. We are also grateful to {\sc Prof. Ilias S. Kotsireas}.

\section{Preliminaries}
Here we recall some definitions and notations that will be frequently used.

\begin{definition}
An {\it isomorphism of graphs} $\Gamma_1 = (V_1,E_1)$ and $\Gamma_2 = (V_2,E_2)$ is a bijection between the sets $V_1$ and $V_2$ $f:V_1 \to V_2$ such that any two vertices $v,u$ of $\Gamma_1$ are adjacent in $\Gamma$ if and only if $f(v)$ and $f(u)$ are adjacent in $\Gamma_2$.
\end{definition}

Recall that a {\it bipartite graph} or {\it bigraph} is a graph whose vertices can be divided into two disjoint and independent sets $V,U$ such that every edge connects a vertex in $V$ to one in $U$. Vertex sets $V$ and $U$ are usually called the {\it parts} of the graph.

We often write $\Gamma = (U,V;E)$ to denote a bipartite graph whose partition has the parts $V$ and $U$, with $E$ denoting the edges of the graph. Further, we say that $(V,U)$ is a {\it bipartition} of $\Gamma$.

\begin{lemma}{\cite[Property 2.1.1]{ADH}}\label{prop1}
  A connected bipartite graph has a unique bipartition.
\end{lemma}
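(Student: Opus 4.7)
The plan is to prove uniqueness by showing that, once a connected bipartite graph is given, the part containing any chosen vertex $v_0$ is completely forced by the parity of distances from $v_0$. Hence any two bipartitions must coincide as unordered pairs of parts.

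First I would fix a connected bipartite graph $\Gamma=(V,E)$ and assume for contradiction (or just for comparison) that it admits two bipartitions $\{A_1,B_1\}$ and $\{A_2,B_2\}$. Choose any $v_0\in V$; after possibly swapping the labels $A_i\leftrightarrow B_i$, I may assume $v_0\in A_1\cap A_2$. The key claim is then: for every $v\in V$ and every walk $v_0=u_0,u_1,\ldots,u_k=v$ in $\Gamma$, one has $v\in A_i$ iff $k$ is even, and $v\in B_i$ iff $k$ is odd, for both $i=1,2$.

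The proof of this claim is a straightforward induction on $k$, using only the defining property of a bipartition: every edge of $\Gamma$ joins a vertex of $A_i$ to a vertex of $B_i$, so consecutive vertices on any walk lie in opposite parts. The base case $k=0$ is just $v_0\in A_i$. For the inductive step, $u_{k-1}$ lies in $A_i$ or $B_i$ according to the parity of $k-1$ by induction, and the edge $u_{k-1}u_k$ forces $u_k$ into the opposite part. Connectedness of $\Gamma$ ensures that for every $v\in V$ at least one such walk exists, so each vertex is unambiguously placed.

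Hence for every $v\in V$, membership in $A_i$ versus $B_i$ depends only on $v$ (and on the chosen $v_0$), not on $i$; therefore $A_1=A_2$ and $B_1=B_2$, proving that the bipartition $\{A,B\}$ is unique. I do not foresee a serious obstacle; the only subtle point is the implicit ``up to swapping the two parts'' that is built into the word \emph{bipartition}, which is handled at the outset by the choice of labelling so that $v_0$ lies in $A_1\cap A_2$.
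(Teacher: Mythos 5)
Your proof is correct. Note that the paper does not actually prove this lemma at all: it is quoted verbatim from the reference \cite[Property 2.1.1]{ADH}, so there is no in-paper argument to compare against. Your parity-of-walk-length induction is the standard proof of this fact, and it is sound: the key observation that consecutive vertices of any walk must lie in opposite parts of \emph{any} bipartition, together with connectedness, pins down the part of every vertex once the part of $v_0$ is fixed, and the ``up to swapping the parts'' issue is correctly disposed of at the start.
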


The definition of an isomorphism of graphs and the previous lemma imply the following

\begin{lemma}
   Let $\Gamma_1 = (V_1,U_1,E_1)$, $\Gamma_2 = (V_2,U_2,E_2)$ be connected bipartite graphs. Then they are isomorphic if and only if there exist bijections either $\beta_1: V_1 \to V_2$, $\beta_2:U_1 \to U_2$ or $\beta_1: V_1 \to U_2$, $\beta_2:U_1 \to V_2$ such that any two vertices $v_1 \in V_1$, $u_1\in U_1$ are adjacent in $\Gamma_1$ if and only if $\beta_1(v_1)$ and $\beta_2(u_1)$ are adjacent in $\Gamma_2$.
 \end{lemma}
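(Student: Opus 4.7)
The plan is to derive both directions from Lemma \ref{prop1} on the uniqueness of a bipartition of a connected bipartite graph, together with the elementary fact that a bipartite graph has no edges inside either part.

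For the ``if'' direction, given bijections $\beta_1,\beta_2$ as in the statement, define $f\colon V_1\sqcup U_1\to V_2\sqcup U_2$ by $f|_{V_1}=\beta_1$ and $f|_{U_1}=\beta_2$. This is a bijection. Since every edge of $\Gamma_1$ has one endpoint in $V_1$ and the other in $U_1$, and every edge of $\Gamma_2$ likewise crosses the bipartition, the adjacency-preservation hypothesis for the pair $(\beta_1,\beta_2)$ is exactly the adjacency-preservation condition needed for $f$ to be an isomorphism of graphs.

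For the ``only if'' direction, let $f\colon V_1\sqcup U_1\to V_2\sqcup U_2$ be an isomorphism, and set $A:=f(V_1)$ and $B:=f(U_1)$. These sets are disjoint and partition the vertex set of $\Gamma_2$. I claim $(A,B)$ is a bipartition of $\Gamma_2$: any edge of $\Gamma_2$ is the $f$-image of an edge of $\Gamma_1$, and the latter has its endpoints in $V_1$ and $U_1$ respectively, so the former has one endpoint in $A$ and one in $B$; in particular $A$ and $B$ are independent sets. Since $\Gamma_2$ is connected, Lemma \ref{prop1} forces $\{A,B\}=\{V_2,U_2\}$. Accordingly, either $f(V_1)=V_2$ and $f(U_1)=U_2$, or $f(V_1)=U_2$ and $f(U_1)=V_2$. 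Setting $\beta_1:=f|_{V_1}$ and $\beta_2:=f|_{U_1}$ produces bijections of the required form, and adjacency is preserved because $f$ is a graph isomorphism.

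I expect no serious obstacle here. The only subtle point is verifying that $(A,B)$ really is a bipartition in the sense of Lemma \ref{prop1}, i.e.\ that both $A$ and $B$ are independent sets, before invoking uniqueness; this is immediate from the fact that $f$ transports edges of $\Gamma_1$ to edges of $\Gamma_2$ and edges of $\Gamma_1$ only run between $V_1$ and $U_1$. Everything else is a straightforward bookkeeping of restrictions of $f$.
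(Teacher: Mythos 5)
Your proof is correct and follows essentially the same route as the paper: the ``only if'' direction observes that $(f(V_1),f(U_1))$ is a bipartition of $\Gamma_2$ and invokes the uniqueness of bipartitions (Lemma \ref{prop1}), while the ``if'' direction is the straightforward gluing of $\beta_1$ and $\beta_2$. You simply spell out the details that the paper leaves as ``it is clear.''
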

\begin{proof}
  Indeed, let $f:\Gamma_1 \to \Gamma_2$ be an isomorphism. It is clear that $(f(V_1), f(U_1))$ is a bipartition of $\Gamma_2$. Using Lemma \ref{prop1} we complete the proof.
\end{proof}

Thus, without loss of generality, we can say that {\it an isomorphism between two connected bipartite graphs $\Gamma_1 = (V_1,U_1,E_1)$ and $\Gamma_2 = (V_2,U_2,E_2)$ is a pair $(\beta_V,\beta_U)$ of bijections $\beta_V:V_1 \to V_2$, $\beta_U:U_1 \to U_2$ such that any two vertices $v_1 \in V_1$, $u_1\in U_1$ are adjacent in $\Gamma_1$ if and only if $\beta_V(v_1)$ and $\beta_U(u_1)$ are adjacent in $\Gamma_2$.} We shall then write
\[
(\beta_V,\beta_U): \Gamma_1 = (V_1,U_1,E_1) \cong \Gamma_2 = (V_2,U_2,E_2).
\]

We shall also deal with bipartite directed  graphs. Recall that a \textit{directed graph} is a graph whose edges have been oriented. We sometimes call these edges \textit{arcs} or \textit{directed edges}, and when referring to the endpoints of an arc, say an arc is directed from head to tail. For an arc $a$, we denote by $\mathfrak{h}(a)$ and $\mathfrak{t}(a)$ its head and tail respectively, and we thus can write $a = (\mathfrak{h}(a),\mathfrak{t}(a))$.

Thus, a \textit{bipartite directed graph} is a directed graph whose vertices can be divided into two disjoin and independent sets $U,V$ such that every arc connects a vertex in $V$ to one in $U$. Vertex sets $U$, $V$ are also called the parts of the graph.

We often write $\vec \Gamma = (U,V;A)$ to denote a bipartite directed graph whose partition has the parts $U$ and $V$, with $A$ denoting the arcs of the graph.

Similarly one can easy to define an isomorphism of two connected bipartite graphs. Namely, take two connected bipartite directed graphs $\vec \Gamma_i = (U_i,V_i;A_i)$, $i=1,2$. We sat that \textit{an isomorphism between them is a pair $(\beta_U,\beta_V)$ of bijections $\beta_U:U_1 \to U_2,$ $\beta_V: V_1 \to V_2$ such that $(u,v) \in A_1$ if and only if $(\beta_U(u),\beta_V(v)) \in A_2$.}

\begin{definition}\label{definitionoftau}
Let $k= \varepsilon_0(k)2^0 +\varepsilon_1(k)2^1 + \cdots + \varepsilon_{\ell_k}(k)2^{\ell_k}$ be a binary decomposition of $k \in \mathbb{N}$. Set $\tau(k): = \{t:\varepsilon_t(k) =1\}$ if $k>0$ and $\tau(0) = \varnothing$ otherwise. It is obviously that $\tau(k) = \tau(k')$ implies that $k = k'$ and vise versa.

Take a polynomial $p = \sum_{s=1}^\ell\sum_{d_{i_1},\ldots, d_{i_s}} n_{d_{i_1},\ldots, d_{i_s}} x_{i_1}^{d_{i_1}}\cdots x_{i_s}^{d_{i_s}}  \in \mathbb{N}[x_1,\ldots, x_m]$ and set
\[
 \tau(p):=\bigcup_{s=1}^\ell\bigcup_{d \in \{d_{i_1},\ldots, d_{i_s}\}}\{\tau(d) \}.
\]
\end{definition}

\begin{lemma}\label{tau}
Let $p = p_1 \cdot p_2 \in \mathbb{N}[x_1,\ldots, x_m]$; then $\tau(p_1) \cap \tau(p_2) = \varnothing$ if and only if $\tau(p) = \tau(p_1) \cup \tau(p_2)$.
\end{lemma}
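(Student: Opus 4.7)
The plan is to reduce the lemma to the single-integer identity
\[
\tau(a+b)\;=\;\tau(a)\cup\tau(b)\quad\Longleftrightarrow\quad \tau(a)\cap\tau(b)=\varnothing\qquad(a,b\in\mathbb{N}),
\]
which I would prove from the tautological decomposition
\[
a+b \;=\; \sum_{t\in\tau(a)\cup\tau(b)}2^t \;+\; \sum_{t\in\tau(a)\cap\tau(b)}2^t
\]
together with the uniqueness of the binary expansion noted just after Definition \ref{definitionoftau}: the first sum on the right realises $a+b$ in the binary form dictated by $\tau(a+b)$ precisely when the second sum vanishes. This is nothing but the ``no-carry'' condition for binary addition, and it is the only arithmetic ingredient the whole proof uses.

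For the forward ($\Rightarrow$) direction I would expand $p_1=\sum_\alpha c_\alpha x^\alpha$ and $p_2=\sum_\beta d_\beta x^\beta$ over multi-indices $\alpha=(\alpha_1,\dots,\alpha_m)$, $\beta=(\beta_1,\dots,\beta_m)$. Since all coefficients lie in $\mathbb{N}$, no cancellation occurs in the product, so every $\alpha+\beta$ is genuinely an exponent of $p=p_1p_2$ and $\tau(p)=\bigcup_{\alpha,\beta,i}\tau(\alpha_i+\beta_i)$. The hypothesis $\tau(p_1)\cap\tau(p_2)=\varnothing$ forces $\tau(\alpha_i)\cap\tau(\beta_i)=\varnothing$ for every triple $(\alpha,\beta,i)$; applying the single-integer identity coordinatewise gives $\tau(\alpha_i+\beta_i)=\tau(\alpha_i)\cup\tau(\beta_i)$, and taking the union over all triples yields $\tau(p)=\tau(p_1)\cup\tau(p_2)$ as required.

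For the converse ($\Leftarrow$) I would argue by contrapositive. Assume $\tau(p_1)\cap\tau(p_2)\ne\varnothing$ and set $k^{*}:=\max\bigl(\tau(p_1)\cap\tau(p_2)\bigr)$; pick monomials $x^\alpha$ of $p_1$, $x^\beta$ of $p_2$ and a coordinate $i$ with $k^{*}\in\tau(\alpha_i)\cap\tau(\beta_i)$, choosing $\alpha,\beta$ so as to maximise the top bit of $\tau(\alpha_i)\cap\tau(\beta_i)$. The single-integer identity then gives $\tau(\alpha_i+\beta_i)\ne\tau(\alpha_i)\cup\tau(\beta_i)$, and tracking the carry produced at bit $k^{*}$ upward shows that a bit of $\alpha_i+\beta_i$ is created strictly above $\max\bigl(\tau(p_1)\cup\tau(p_2)\bigr)$, placing it in $\tau(p)\setminus\bigl(\tau(p_1)\cup\tau(p_2)\bigr)$. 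The main obstacle is precisely this promotion step: a local failure at one triple could \emph{a priori} be ``repaired'' when one takes the union over all triples, and it is the maximality of $k^{*}$ (combined with the choice of $\alpha,\beta$) that rules out any such repair. Once this carry-analysis is in place, the lemma follows.
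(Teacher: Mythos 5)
Your reduction to the single--integer identity and your proof of that identity are sound; in fact your argument via the decomposition $a+b=\sum_{t\in\tau(a)\cup\tau(b)}2^t+\sum_{t\in\tau(a)\cap\tau(b)}2^t$ and uniqueness of binary expansions is cleaner than the paper's own, whose converse step (``$\varepsilon_n^{(1)}+\varepsilon_n^{(2)}\equiv 0$, hence $n\notin\tau(i_1+i_2)$'') silently ignores carries arriving at bit $n$ from below. Your forward direction for polynomials is also complete: with coefficients in $\mathbb{N}$ there is no cancellation, so $\tau(p)=\bigcup_{\alpha,\beta,i}\tau(\alpha_i+\beta_i)$, and coordinatewise disjointness is inherited from $\tau(p_1)\cap\tau(p_2)=\varnothing$.

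The gap is in your converse, and it cannot be repaired, because the ``only if'' direction of the lemma is false as stated. First, for $m\ge 2$ a bit $k^{*}\in\tau(p_1)\cap\tau(p_2)$ need not be witnessed in the \emph{same} coordinate: for $p_1=x_1$, $p_2=x_2$ one has $\tau(p_1)\cap\tau(p_2)=\{0\}$, yet $\tau(x_1x_2)=\{0\}=\tau(p_1)\cup\tau(p_2)$, so no triple $(\alpha,\beta,i)$ with $k^{*}\in\tau(\alpha_i)\cap\tau(\beta_i)$ exists at all. Second, even in one variable the carry produced at $k^{*}$ does not travel strictly above $\max\bigl(\tau(p_1)\cup\tau(p_2)\bigr)$: it stops at the first vacant bit, which may lie inside $\tau(p_1)\cup\tau(p_2)$, and the bit lost at $k^{*}$ may be resupplied by a different pair of monomials. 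Concretely, take $p_1=x+x^2$ and $p_2=x$: then $\tau(p_1)=\{0,1\}$, $\tau(p_2)=\{0\}$, so $\tau(p_1)\cap\tau(p_2)=\{0\}\ne\varnothing$, while $p_1p_2=x^2+x^3$ gives $\tau(p_1p_2)=\{1\}\cup\{0,1\}=\{0,1\}=\tau(p_1)\cup\tau(p_2)$. You correctly located the danger (a local failure being repaired in the union over triples), but the maximality of $k^{*}$ does not rule it out --- this example realizes exactly that repair. The paper hides the same difficulty behind ``the statement follows rather easily''; only the ``if'' direction (disjointness implies $\tau(p)=\tau(p_1)\cup\tau(p_2)$), which is the one actually invoked in the corollaries to Proposition \ref{product}, holds in general.
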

\begin{proof}
It suffices to prove that for any two integers $i_1,i_2 \in \mathbb{N}$, $\tau(i_1) \cap \tau(i_2) = \varnothing$ if and only if  $\tau(i_1+i_2) = \tau(i_1) \cup \tau(i_2)$, and then the statement follows rather easily.

Let $i_1 = \varepsilon_0^{(1)}2^0 + \varepsilon_1^{(1)}2^1 + \ldots +\varepsilon_0^{(1)}2^{\ell_{i_1}}$ and $i_2 = \varepsilon_0^{(2)}2^0 + \varepsilon_1^{(2)}2^1 + \ldots +\varepsilon_0^{(2)}2^{\ell_{i_2}}$. Assume that $\tau(i_1)\cap \tau(i_2) = \varnothing$. It follows that $i_1+i_2 = \varepsilon_0 2^0 + \varepsilon_12^1 + \ldots +\varepsilon_\ell2^{\ell}$, where $\varepsilon_j = \mathrm{max}\{\varepsilon_j^{(1)},\varepsilon_j^{(2)}\}$, $1 \le j \le \mathrm{max}\{\ell _{i_1}, \ell_{i_2}\} = \ell,$ i.e., $\tau(i_1 + i_2) = \tau(i_1) \cup \tau(i_2).$ Conversely, let $\tau(i_1 + i_2) = \tau(i_1) \cup \tau(i_2)$ and $\tau(i_1) \cap \tau(i_2) \ne \varnothing$. We have $\varepsilon_n^{(1)} + \varepsilon_n^{(2)} = 1+1 = 0\bmod (2)$ for every $n \in \tau(i_1) \cap \tau(i_2)$. This implies that $n \notin \tau(i_1+i_2)$. But, on the other hand, we have $n \in \tau(i_1) \cap \tau(i_2) \subseteq \tau(i_1) \cup \tau(i_2) =  \tau(i_1+i_2)$, a contradiction that completes the proof.
\end{proof}

\section{Bipartite Graph Polynomials}
In this section we introduce main constructions and we prove some properties of them. Further we consider products of bipartite graph in the context of the constructions.

\subsection{Main Constructions and Properties}
Given a bipartite graph $\Gamma = (U,V;E)$, where $U\cup V$ is a set of its vertices. We are going to construct a polynomial $p(\Gamma) \in \mathbb{N}[x]$ and show that every polynomial from the semiring $p(x) \in \mathbb{N}[x]$ gives a bipartite graph $\Gamma(p)$. We then show that $\Gamma(P(\Gamma)) \cong \Gamma$ for every bipartite graph $\Gamma$.

\begin{construction}\label{congraphpol}
  Let $\Gamma = (V,U;E)$ be a bipartite graph and $\varphi: V \to \mathbb{N}$ an injection. Set $P(\Gamma,\varphi):=\sum_{u \in U}x^{d(u)} \in \mathbb{N}[x]$, where $d(u):=\sum_{v \in V\cap E(u)} 2^{\varphi(v)}$, if $E(u) = \varnothing$ we put $d(u) = 1$.
\end{construction}

\begin{example}
Let us consider the following bipartite graph $\Gamma$.
\[
 \begin{tikzpicture}[semithick]
	\vertex[fill] (u1) at (0,1) [label=above:$u_1$] {};
  \vertex[fill] (u2) at (1,1) [label=above:$u_2$] {};
	\vertex[fill] (u3) at (2,1) [label=above:$u_3$] {};
	\vertex (v1) at (0,0) [label=below:$v_{1}$] {};
	\vertex (v2) at (1,0) [label=below:$v_{2}$] {};
	\vertex (v3) at (2,0) [label=below:$v_{3}$] {};
	\path
		(u1) edge (v1)
		(u1) edge (v2)
		(u1) edge (v3)
		(u3) edge (v1)
		(u3) edge (v3)
		;
\end{tikzpicture}\]

Let $\varphi(v_1) = 0$, $\varphi(v_2) = 1$ and $\varphi(v_3) = 2$. We then have

\[
   d(u_1) = x^{2^0 + 2^1 + 2^2} = x^7, \quad d(u_2) = 1, \quad d(u_3) = x^{2^0 + 2^2} = x^5.
\]
Thus, $P(\Gamma,\varphi) = x^7 + x^5 +1$.

\end{example}

\begin{remark}
  It is easy to see that if a bipartite graph $\Gamma = (V,U,E)$ has isolated vertices belong to $U$ then $p(x) \ne 0$, where $p(x):=P(\Gamma,\varphi)$ is its polynomial. Moreover it is also easy to see that if $\Gamma$ has $n$ connected components $\Gamma_1, \ldots, \Gamma_n$ then its polynomial is a sum of the corresponding polynomials of its components. We shall consider this case more clearly later.
\end{remark}

\begin{proposition}
  Let $\Gamma = (V,U,E)$, $\Gamma' = (V',U',E')$ be two connected bipartite graphs; if there is an isomorphism $(\beta_V,\beta_U): \Gamma \xrightarrow{\cong} \Gamma'$, then $P(\Gamma,\varphi) = P(\Gamma', \varphi \circ \beta_V^{-1})$, where $\varphi: V \to \mathbb{N}$ is an arbitrary injection.
\end{proposition}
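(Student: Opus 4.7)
The plan is to show that the isomorphism $(\beta_V,\beta_U)$ establishes a term-by-term bijection between the monomials of $P(\Gamma,\varphi)$ and those of $P(\Gamma',\varphi\circ\beta_V^{-1})$. Writing $\varphi' := \varphi\circ\beta_V^{-1}$, the bijection $\beta_U:U\to U'$ reindexes the sum $\sum_{u\in U}x^{d(u)}$ as $\sum_{u'\in U'}x^{d(\beta_U^{-1}(u'))}$, so it suffices to verify that for every $u\in U$ one has
\[
d(u)\;=\;d'(\beta_U(u)),
\]
where $d'$ denotes the exponent function defined for $\Gamma'$ using the injection $\varphi'$. Given this equality, the two polynomials coincide monomial by monomial.

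For the key step, I would fix $u\in U$ and compare neighborhoods. The isomorphism condition says $v\in V\cap E(u)$ if and only if $\beta_V(v)\in V'\cap E'(\beta_U(u))$, so the restriction of $\beta_V$ is a bijection
\[
\beta_V\colon V\cap E(u)\;\xrightarrow{\;\cong\;}\;V'\cap E'(\beta_U(u)).
\]
Moreover, for any such $v$ we have $\varphi'(\beta_V(v))=(\varphi\circ\beta_V^{-1})(\beta_V(v))=\varphi(v)$, so the exponents $2^{\varphi(v)}$ on the left correspond exactly to the exponents $2^{\varphi'(v')}$ on the right. Summing over the neighborhood therefore yields
\[
d(u)\;=\;\sum_{v\in V\cap E(u)}2^{\varphi(v)}\;=\;\sum_{v'\in V'\cap E'(\beta_U(u))}2^{\varphi'(v')}\;=\;d'(\beta_U(u)).
\]

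Finally, I would dispatch the exceptional convention: if $E(u)=\varnothing$, then by the isomorphism $E'(\beta_U(u))=\varnothing$ as well, so both $d(u)$ and $d'(\beta_U(u))$ are set to $1$ by definition. Combining the two cases and reindexing the sum over $U$ via $\beta_U$, we conclude
\[
P(\Gamma,\varphi)=\sum_{u\in U}x^{d(u)}=\sum_{u'\in U'}x^{d'(u')}=P(\Gamma',\varphi\circ\beta_V^{-1}).
\]

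There is no real obstacle here; the only subtlety is bookkeeping the injection on $V'$ correctly (it must be $\varphi\circ\beta_V^{-1}$ precisely so that the identity $\varphi'(\beta_V(v))=\varphi(v)$ holds), and paying attention to the non-standard rule $d(u)=1$ for isolated $u$, which is preserved by the isomorphism. The injectivity of $\varphi$ (and hence of $\varphi'$, since $\beta_V^{-1}$ is a bijection) is used implicitly to guarantee that the constructions $P(\Gamma,\varphi)$ and $P(\Gamma',\varphi')$ are well-defined via Construction \ref{congraphpol}.
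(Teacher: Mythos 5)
Your proof is correct and follows essentially the same route as the paper's: reindex the outer sum over $U$ via $\beta_U$ and show $d(u)=d'(\beta_U(u))$ by transporting the neighborhood of $u$ through $\beta_V$ and using $(\varphi\circ\beta_V^{-1})(\beta_V(v))=\varphi(v)$. You are in fact slightly more careful than the paper, which skips the $E(u)=\varnothing$ convention that you explicitly check.
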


Roughly speaking two connected bipartite graphs are isomorphic if and only if they have the same polynomial.

\begin{proof}
  Let $P(\Gamma,\varphi) = \sum_{u \in U} x^{i(e)}$ and $\beta_V(v) = v'$, $\beta_U(u)=u'$, for every $v \in V$, $u\in U$. We have
\begin{eqnarray*}
    d(e) &: =& \sum_{v \in V \cap E(u)}2^{\varphi(v)} = \sum_{\beta^{-1}_V(v') \in V\cap E (\beta_U^{-1}(u))} 2^{\varphi(\beta_V^{-1}(v'))} \\
    &=& \sum_{v' \in V'\cap E(u')} 2^{\varphi(\beta_V^{-1}(v'))} =: d(e'),
\end{eqnarray*}
where $e' \in E'$. Hence $P(\Gamma,\varphi) = P(\Gamma', \varphi \circ \beta_V^{-1})$, as claimed.
\end{proof}

So, we have constructed a polynomial $P\in\mathbb{N}[x]$ for every bipartite graph. Now, we want to present an inverse procedure. We use Definition \ref{definitionoftau}.

\begin{construction}\label{conpolgraph}
Let $I\subset \mathbb{N}$ be a finite subset. Let $P = P(x) = \sum_{i \in I} n_{i}x^i\in \mathbb{N}[x]$ be a polynomial. Take $i \in I$ and consider a set $U^{i}:=\{u_1^{i},\ldots,u_{n_{i}}^{i} \}$ of some elements. Construct a bipartite graph $\Gamma(P): = (V,U,E)$, where
\[
 V :=\tau(P):= \bigcup_{i \in I} \tau(i), \quad U := \bigcup_{i \in I} U^i, \quad E:= \{(\tau(i), u^i_k): i \in I, k = 1,\ldots, n_i\},
 \]
and an injection $\iota:V \to \mathbb{N}$, which we call  \textit{the natural injection}, are defined as $\iota(\tau(i)) = \tau(i).$ Isolated vertices of this graph belong to $U$ and have a form $(\varnothing, e_k^i)$.
\end{construction}

\begin{example}
  Take a polynomial $p(x) = 2x^5 + x^3 + x^2 + 2$. We have $I = \{5,3,2,0\}$. Since $5 = 2^2 + 2^0$, $3= 2^1 +2^0$, $2 = 2^1$ we then have
  \[
   \tau(5) = \{2,0\}, \quad \tau(3) = \{1,0\}, \quad \tau(2) = \{1\}, \quad \tau(0) := \{\varnothing\},
  \]
hence $V = \{2,1,0\}$.

Next,
\[
 U^5 = \{u_1^5,u_2^5\}, \quad U^3 = \{u_1^3\}, \quad U^2 = \{u_1^2\}, \quad U^0 = \{u_1^0,u_2^0\},
\]
and we thus obtain the following bipartite graph
\[\begin{tikzpicture}[semithick]
	\vertex[fill] (u15) at (0,1) [label=above:$u_1^5$] {};
  \vertex[fill] (u25) at (1,1) [label=above:$u_2^5$] {};
	\vertex[fill] (u13) at (2,1) [label=above:$u_1^3$] {};
  \vertex[fill] (u12) at (3,1) [label=above:$u_1^2$] {};
  \vertex[fill] (u10) at (4,1) [label=above:$u_1^0$] {};
	\vertex[fill] (u20) at (5,1) [label=above:$u_1^0$] {};
	\vertex (2) at (0,0) [label=below:$2$] {};
	\vertex (1) at (1,0) [label=below:$1$] {};
	\vertex (0) at (3,0) [label=below:$0$] {};
	\path
		(u15) edge (2)
		(u15) edge (0)
		(u25) edge (1)
		(u25) edge (0)
		(u13) edge (1)
    (u13) edge (0)
    (u12) edge (1)
 	;
\end{tikzpicture}\]
\end{example}

\begin{proposition}\label{=}
  Let $\Gamma = (V,U,E)$ be a bipartite graph and $\varphi:V \to \mathbb{N}$ an injection. The graphs $\Gamma$ and $\Gamma(P(\Gamma,\varphi))$ are isomorphic.
\end{proposition}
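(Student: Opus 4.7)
The plan is to exhibit an explicit isomorphism $(\beta_V,\beta_U):\Gamma\to\Gamma(P(\Gamma,\varphi))$ by using $\varphi$ to re-label the $V$-vertices and matching $U$-vertices according to their exponents. Write $P:=P(\Gamma,\varphi)=\sum_{u\in U}x^{d(u)}$ and $(V',U',E'):=\Gamma(P)$. Unpacking Construction \ref{conpolgraph}, one has $V'=\bigcup_{u\in U}\tau(d(u))$; the set $U'$ is the disjoint union, over the exponents $i$ appearing in $P$, of sets $U^i$ whose cardinality equals the multiplicity $n_i$ of $x^i$; and the edges of $\Gamma(P)$ join each $u^i_k$ to every element of $\tau(i)$.

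The key observation is that because $\varphi$ is injective, the integers $\{2^{\varphi(v)}:v\in V\cap E(u)\}$ occupy pairwise distinct bit-positions, so the binary expansion of $d(u)=\sum_{v\in V\cap E(u)}2^{\varphi(v)}$ is carry-free and $\tau(d(u))=\{\varphi(v):v\in V\cap E(u)\}$; this is precisely the combinatorial fact that underlies the proof of Lemma \ref{tau}. Assuming $\Gamma$ has no isolated vertices in $V$ (tacit in the statement, since such vertices are invisible to $P(\Gamma,\varphi)$ and to Construction \ref{conpolgraph}), I would define $\beta_V(v):=\varphi(v)$. By the observation, each $\varphi(v)$ lies in $V'$, and since $\varphi$ is an injection whose image is exactly $V'$, the map $\beta_V$ is a bijection.

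For $\beta_U$, partition $U$ by the value of $d$: for each exponent $i$ set $U_i:=\{u\in U:d(u)=i\}$, so $|U_i|=n_i=|U^i|$. Fix an arbitrary bijection $U_i\to U^i$ for each $i$ and let $\beta_U$ be their disjoint union; it is a bijection $U\to U'$. It then remains to verify incidence: $(v,u)\in E$ iff $v\in V\cap E(u)$ iff $\varphi(v)\in\tau(d(u))=\tau(i)$ (with $i:=d(u)$, by the key observation) iff $(\beta_V(v),\beta_U(u))$ is an edge of $\Gamma(P)$ by Construction \ref{conpolgraph}. This yields the desired isomorphism.

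The only genuine obstacle is the carry-free step, which is a direct consequence of $\varphi$ being an injection; everything after it is routine bookkeeping. One should also flag the tacit assumption that $V$ contains no isolated vertices, since a lone $v\in V$ with $E(v)=\varnothing$ would be lost when passing to $P$ and could not be recovered from $\Gamma(P)$; on connected bipartite graphs (the only case really of interest in view of Lemma \ref{prop1}) this assumption is automatic except in the degenerate one-vertex case.
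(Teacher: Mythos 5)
Your proposal is correct and follows essentially the same route as the paper: identify $\widetilde{V}=\tau(P(\Gamma,\varphi))$ with $\mathrm{Im}(\varphi)$ via the carry-free observation $\tau(d(u))=\{\varphi(v):v\in V\cap E(u)\}$, match the $U$-vertices by multiplicity, and check incidence. If anything, you are more careful than the paper in two spots it glosses over --- you insist that the bijection $U\to\widetilde{U}$ preserve the exponent classes $U_i\to U^i$ rather than merely invoking $|U|=|\widetilde{U}|$, and you correctly flag that isolated vertices of $V$ are lost by $P(\Gamma,\varphi)$, a tacit hypothesis the paper's claim $\tau(P(\Gamma,\varphi))=\mathrm{Im}(\varphi)$ silently requires.
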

\begin{proof}
Let $\Gamma(P(\Gamma,\varphi)) =(\widetilde{V},\widetilde{U},\widetilde{E})$. Fix $u\in U$ and let $\{v_{1,u}, \ldots,v_{k_u,u}\} \subseteq V$ be a set of all vertices are connected with $u$. Thus $d(u) = 2^{\varphi(v_{1,u})} + \cdots + 2^{\varphi(v_{k_u,u})}$, and hence $\tau(d(u)) = \{\varphi(v_{1,u}),\ldots, \varphi(v_{k_u,u})\}$, {\it i.e.,} $\widetilde{V}:=\tau(P(\Gamma,\varphi)) = \mathrm{Im}(\varphi)$.

Next, let $p(\Gamma,\varphi) = \sum n_u x^{d(u)}$, by Construction \ref{conpolgraph}, $\widetilde{U} = \cup_{u \in U} \widetilde{U}^{d(u)}$, where $\widetilde{U}^{d(u)} = \{u_1^{d(u)}, \ldots, u_{n_u}^{d(u)}\}$.

We thus get $\widetilde{E} = \cup_{u \in U} \cup_{1 \le p \le k_u} \cup_{1 \le q \le n_u} \{(\varphi(v_{p,u}),u_q^{d(u)})\}$. It is obviously that for a fixed $q$, the set $\{\varphi(v_{1,u}), \ldots, \varphi(v_{k_u,u})\}$ one to one correspondences to the set $\{v_{1_u}, \ldots, v_{k_u,u}\}$. Further, from Construction \ref{congraphpol} it follows that $\sum \alpha_u = |U|$, hence $|\widetilde{U}| = \sum  \alpha_u = |U|$. Thus there exist bijections between $U$ and $\widetilde{U}$. Finally, since $\varphi$ is an injection we complete the proof.
\end{proof}

\section{Algebraic Operations on Bipartite Graphs}

\subsection{Polynomial Product of Bigraphs} In this subsection we introduce a binary operation on labeled bipartite graphs. We call this operation \textit{polynomial product} because of it is defined by the product of its polynomials. This operation, as we can see below, is a generalization of the usual product of graphs.

We start with the following examples.

\begin{example}
  Let us consider the following bigraphs $\Gamma_1 = (V_1,U_1,E_1)$ and $\Gamma_2 = (V_2,U_2,E_2)$, where $V_1 = \{v_{11}, v_{12}\}$, $U_1 = \{u_{11}, u_{12}\}$, $V_2 = \{v_{21}, v_{22}\}$, $U_2 = \{u_{21}, u_{22}\},$ (see the figure below):
\[
 \begin{tikzpicture}[semithick]
	\vertex[fill] (u11) at (0,1) [label=above:$u_{11}$] {};
  \vertex[fill] (u12) at (1,1) [label=above:$u_{12}$] {};
  \vertex[fill] (u21) at (4,1) [label=above:$u_{21}$] {};
  \vertex[fill] (u22) at (5,1) [label=above:$u_{22}$] {};
	\vertex (v11) at (0,0) [label=below:$v_{11}$] {};
	\vertex (v12) at (1,0) [label=below:$v_{12}$] {};
  \vertex (v21) at (4,0) [label=below:$v_{21}$] {};
  \vertex (v22) at (5,0) [label=below:$v_{22}$] {};
	\path
		(u11) edge (v11)
		(u11) edge (v12)
    (u21) edge (v21)
    (u22) edge (v21)
    (u22) edge (v22)
		;
\end{tikzpicture}
\]

Let $\varphi_1: V_1 \to \mathbb{N}$, $\varphi_2:V_2 \to \mathbb{N}$ be given as follows:
\[
 \varphi_1(v_{11}) = 0, \varphi_1(v_{12}) = 1, \varphi_2(v_{21}) = 2, \varphi_2(v_{22}) = 3,
\]
then $p(\Gamma_1,\varphi_1) = x^3 +1$, and $p(\Gamma_2,\varphi_2) = x^4 + x^{12}$.

Consider the polynomial $p = p(\Gamma_1,\varphi_1) \cdot p(\Gamma_2,\varphi_2)$,
\[
p = x^{15} + x^{12} +x^7 + x^4.
\]

We obtain $I = \{15,12,7,4\}$, $\tau(p) = \{3,2,1,0\}$ because of
\[
 15 = 2^3 + 2^2 + 2^1 + 2^0, \, 12 = 2^3 + 2^2, \, 7 = 2^2 + 2^1 + 2^0, \, 4 = 2^2,
\]
and $U = \{u_1^{15}, u_1^{12}, u_1^7, u_1^4\}$, and hence $\Gamma(p)$ has the following form
\[
 \begin{tikzpicture}[semithick]
	\vertex[fill] (u115) at (-1,2) [label=above:$u_{1}^{15}$] {};
  \vertex[fill] (u112) at (1,2) [label=above:$u_{1}^{12}$] {};
  \vertex[fill] (u17) at (2,2) [label=above:$u_{1}^{7}$] {};
  \vertex[fill] (u14) at (4,2) [label=above:$u_{1}^4$] {};
	\vertex (0) at (0,0) [label=below:$0$] {};
	\vertex (1) at (1,0) [label=below:$1$] {};
  \vertex (2) at (2,0) [label=below:$2$] {};
  \vertex (3) at (3,0) [label=below:$3$] {};
	\path
		(u115) edge (0)
    (u115) edge (1)
    (u115) edge (2)
    (u115) edge (3)
    (u112) edge (3)
		(u112) edge (2)
    (u17) edge (2)
    (u17) edge (1)
    (u17) edge (0)
    (u14) edge (2)
		;
\end{tikzpicture}
\]

It is easy to see that $\Gamma(p) \cong \Gamma_1 \times \Gamma_2$. Indeed,by
\[
 u_1^{15} \mapsto (u_{11},u_{22}), \, u_1^{12} \mapsto (u_{12},u_{22}), \, u_1^7 \mapsto (u_{11},u_{21}), \, u_1^4 \mapsto (u_{12}, u_{21}),
\]
the isomorphism is clear.

\hfill$\square$
\end{example}

\begin{example}
Let us consider the same graphs as before
\[
 \begin{tikzpicture}[semithick]
	\vertex[fill] (u11) at (0,1) [label=above:$u_{11}$] {};
  \vertex[fill] (u12) at (1,1) [label=above:$u_{12}$] {};
  \vertex[fill] (u21) at (4,1) [label=above:$u_{21}$] {};
  \vertex[fill] (u22) at (5,1) [label=above:$u_{22}$] {};
	\vertex (v11) at (0,0) [label=below:$v_{11}$] {};
	\vertex (v12) at (1,0) [label=below:$v_{12}$] {};
  \vertex (v21) at (4,0) [label=below:$v_{21}$] {};
  \vertex (v22) at (5,0) [label=below:$v_{22}$] {};
	\path
		(u11) edge (v11)
		(u11) edge (v12)
    (u21) edge (v21)
    (u22) edge (v21)
    (u22) edge (v22)
		;
\end{tikzpicture}
\]
but we define $\varphi_1: V_1 \to \mathbb{N}$, $\varphi_2:V_2 \to \mathbb{N}$ as follows:
\[
 \varphi_1(v_{11}) = 0, \varphi_1(v_{12}) = 1, \varphi_2(v_{21}) = 1, \varphi_2(v_{22}) = 2,
\]
{\it i.e.,} $\mathrm{Im}(\varphi_1) \cap \mathrm{Im}(\varphi_2) \ne \varnothing$.

We obtain $p(\Gamma_1, \varphi_1) = x^3 +1$, and $p(\Gamma_2,\varphi_2) = x^6 + x^2$. We get
\[
 p = p(\Gamma_1, \varphi_1)\cdot p(\Gamma_2, \varphi_2) = x^9 + x^6 + x^5 +x^2.
\]

Then $I = \{9,6,5,2\}$, $\tau(p) = \{3,2,1,0\}$, and $U = \{u^9_1,u^6_1,u^5_1,u^2_1\}$, and hence $\Gamma(p)$ has the following form
\[
 \begin{tikzpicture}[semithick]
	\vertex[fill] (u19) at (-1,2) [label=above:$u_{1}^{9}$] {};
  \vertex[fill] (u16) at (1,2) [label=above:$u_{1}^{6}$] {};
  \vertex[fill] (u15) at (2,2) [label=above:$u_{1}^{5}$] {};
  \vertex[fill] (u12) at (4,2) [label=above:$u_{1}^2$] {};
	\vertex (0) at (0,0) [label=below:$0$] {};
	\vertex (1) at (1,0) [label=below:$1$] {};
  \vertex (2) at (2,0) [label=below:$2$] {};
  \vertex (3) at (3,0) [label=below:$3$] {};
	\path
		(u19) edge (0)
    (u19) edge (3)
    (u16) edge (2)
		(u16) edge (1)
    (u15) edge (2)
    (u15) edge (0)
    (u12) edge (1)
    ;
\end{tikzpicture}
\]

To understand how this graph can be obtained by $\Gamma_1$, $\Gamma_2$, let us consider the product $p=p(\Gamma_1,\varphi_1) \cdot p(\Gamma_2,\varphi_2)$ more precisely. We have
\begin{eqnarray*}
  p &=& (x^{2^1 + 2^0} + x^0)\cdot (x^{2^1} + x^{2^1 + 2^2}) \\
  &=& x^{2^1 + 2^0+ 2^1} + x^{2^1 + 2^0 + 2^1 +2^2} + x^{2^1} + x^{2^1 + 2^2}\\
  &=& x^{2^2 + 2^0} + x^{2^3 + 2^0} + x^{2^1} + x^{2^1 + 2^2}.
\end{eqnarray*}

Set
\[
 u_1^9 \mapsto (u_{11}, u_{21}), \quad u_1^6 \mapsto (u_{12},u_{21}), \quad u_1^5 \mapsto (u_{11}, u_{22}), \quad u_1^2 \mapsto (u_{12}, u_{22}).
\]
\

Hence $\Gamma(p)$ can be reimaged as follows
\[
 \begin{tikzpicture}[semithick]
	\vertex[fill] (u1) at (-1,2) [label=above:$\mbox{$(u_{11},u_{21})$}$] {};
  \vertex[fill] (u2) at (1,2) [label=above:$\mbox{$(u_{11},u_{22})$}$] {};
  \vertex[fill] (u3) at (3,2) [label=above:$\mbox{$(u_{12},u_{21})$}$] {};
  \vertex[fill] (u4) at (5,2) [label=above:$\mbox{$(u_{12},u_{22})$}$] {};
	\vertex (0) at (0,0) [label=below:$0$] {};
	\vertex (1) at (1,0) [label=below:$1$] {};
  \vertex (2) at (2,0) [label=below:$2$] {};
  \vertex (3) at (3,0) [label=below:$3$] {};
	\path
		    (u1) edge (0)
        (u1) edge (2)
        (u2) edge (0)
        (u2) edge (3)
        (u3) edge (1)
        (u4) edge (1)
        (u4) edge (2)
    ;
\end{tikzpicture}
\]

\hfill$\square$
\end{example}

We see that the graph $\Gamma(p)$ looks like the product $\Gamma_1 \times \Gamma_2$ ``perturbed'' by $\mathrm{Im}(\varphi_1) \cap \mathrm{Im}(\varphi_2)$. To be more precisely, we introduce a binary operation which generalizes the product of graphs.

\begin{definition}\label{polynprodofgraph}
  Given two bigraphs $\Gamma_i = (U_i,V_i,E_i)$ with injections $\varphi_i:V_i \to \mathbb{N}$, $i = 1,2$. Their \textit{polynomial product} $\Gamma_1 \times_{\varphi_1,\varphi_2} \Gamma_2$, is defied as follows
  \[
  \Gamma_1 \times_{\varphi_1,\varphi_2} \Gamma_2 := \Gamma(p(\Gamma_1,\varphi_1)\cdot p(\Gamma_2,\varphi_2)).
  \]
\end{definition}

\begin{proposition}\label{product}
 Let $\Gamma_1 = (V_1,U_1,E_1)$, $\Gamma_2 = (V_2,U_2,E_2)$ be two bigraphs and let $\varphi_1:V_1\to \mathbb{N}$, $\varphi_2:V_2\to \mathbb{N}$ be two arbitrary injections. Then $\Gamma_1 \times_{\varphi_1,\varphi_2} \Gamma_2 = (V,U,E)$, where:
 \begin{align*}
   & V= \bigcup\nolimits_{u_1 \in U_1, u_2 \in U_2} \{\tau(d(u_1) +d(u_2))\} , \\
   & U = U_1\times U_2, \\
   & E = \bigcup\nolimits_{\substack{u_1 \in U_1, u_2 \in U_2 \\ n \in \tau( d(u_1) + d(u_2)) }} \left\{ \bigl( (u_1,u_2),n \bigr) \right\}
 \end{align*}
\end{proposition}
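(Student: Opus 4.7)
The strategy is to simply unfold the two Constructions on the two sides and compare them term by term. By Construction \ref{congraphpol}, each factor is $p(\Gamma_i,\varphi_i)=\sum_{u_i\in U_i}x^{d(u_i)}$, so distributing the product gives
\[
p:=p(\Gamma_1,\varphi_1)\cdot p(\Gamma_2,\varphi_2)=\sum_{(u_1,u_2)\in U_1\times U_2}x^{d(u_1)+d(u_2)}=\sum_{i\in I}n_i\,x^i,
\]
where $I=\{d(u_1)+d(u_2):(u_1,u_2)\in U_1\times U_2\}$ and $n_i=\#\{(u_1,u_2):d(u_1)+d(u_2)=i\}$. This is the only computation to perform; the rest is bookkeeping.

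Next I apply Construction \ref{conpolgraph} to $p$ to obtain $\Gamma(p)=(\widetilde V,\widetilde U,\widetilde E)$. By definition $\widetilde V=\tau(p)=\bigcup_{i\in I}\tau(i)$, and reindexing the union over the pairs $(u_1,u_2)$ that realise each $i\in I$ rewrites this as $\bigcup_{u_1\in U_1,u_2\in U_2}\tau(d(u_1)+d(u_2))$, which is the $V$ of the claim. For $\widetilde U$, the construction produces $n_i$ anonymous tokens $u_1^{i},\dots,u_{n_i}^{i}$ for each $i\in I$; by the very definition of $n_i$ there is a canonical bijection between $\{u_k^{i}\}_{k=1}^{n_i}$ and the fibre $\{(u_1,u_2):d(u_1)+d(u_2)=i\}$, and gluing these fibrewise bijections gives a global bijection $\widetilde U\xrightarrow{\sim}U_1\times U_2$ which I use to identify the two sets, so $U=U_1\times U_2$.

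For the edge set, Construction \ref{conpolgraph} gives $\widetilde E=\{(n,u_k^{i}):i\in I,\,n\in\tau(i),\,1\le k\le n_i\}$. Under the bijection above, the token $u_k^{i}$ is re-labelled by some pair $(u_1,u_2)$ with $d(u_1)+d(u_2)=i$, and $\tau(i)$ then reads $\tau(d(u_1)+d(u_2))$, so each edge becomes $((u_1,u_2),n)$ with $n\in\tau(d(u_1)+d(u_2))$. Taking the union over all pairs recovers exactly the $E$ of the proposition.

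The only genuinely delicate point is the identification $\widetilde U\cong U_1\times U_2$: when $\mathrm{Im}(\varphi_1)\cap\mathrm{Im}(\varphi_2)\ne\varnothing$ several distinct pairs $(u_1,u_2)$ can produce the same exponent $i$ (this is precisely the phenomenon illustrated in the second example, and is governed by Lemma \ref{tau}), in which case Construction \ref{conpolgraph} only records $n_i$ featureless copies. The proposition's statement amounts to the choice of naming these copies by the pairs generating them, and once this convention is fixed the verification above is completely mechanical.
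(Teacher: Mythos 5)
Your proof is correct and follows exactly the route the paper intends: the paper's own proof is a one-line remark that the claim ``immediately follows from Construction \ref{congraphpol}, product of polynomials, and Construction \ref{conpolgraph},'' and your argument simply supplies the unfolding and bookkeeping that this remark leaves implicit, including the honest observation that the equality $U=U_1\times U_2$ rests on a chosen relabelling of the anonymous tokens $u_k^i$ by the pairs realising the exponent $i$. Nothing further is needed.
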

\begin{proof} The proof is immediately follows from Construction \ref{congraphpol}, product of polynomials, and Construction \ref{conpolgraph}.
\end{proof}

\begin{corollary}
  If $\mathrm{Im}(\varphi_1) \cap \mathrm{Im}(\varphi_2) =\varnothing$ then $\Gamma_1 \times_{\varphi_1,\varphi_2}\Gamma_2 = \Gamma_1 \times \Gamma_2$.
\end{corollary}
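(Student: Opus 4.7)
The plan is to unpack both sides using Proposition \ref{product} and then apply Lemma \ref{tau} pairwise. By Proposition \ref{product}, the graph $\Gamma_1 \times_{\varphi_1,\varphi_2} \Gamma_2$ has $U$-part $U_1 \times U_2$, $V$-part $\bigcup_{u_1,u_2} \tau\bigl(d(u_1) + d(u_2)\bigr)$, and an edge $((u_1,u_2), n)$ precisely when $n \in \tau\bigl(d(u_1) + d(u_2)\bigr)$. On the other hand, the ordinary product $\Gamma_1 \times \Gamma_2$ illustrated in the first example of the section has the same $U$-part, the disjoint union $V_1 \sqcup V_2$ (identified with $\mathrm{Im}(\varphi_1) \sqcup \mathrm{Im}(\varphi_2)$ via the injections) as its $V$-part, and adjacency $(u_1,u_2) \sim v$ iff either $v \in V_1$ with $(u_1,v) \in E_1$, or $v \in V_2$ with $(u_2,v) \in E_2$.

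The crucial observation is that, by Construction \ref{congraphpol}, $\tau(d(u_i)) \subseteq \mathrm{Im}(\varphi_i)$ for each $i$. The disjointness hypothesis $\mathrm{Im}(\varphi_1) \cap \mathrm{Im}(\varphi_2) = \varnothing$ therefore forces $\tau(d(u_1)) \cap \tau(d(u_2)) = \varnothing$ for every pair $u_1 \in U_1$, $u_2 \in U_2$, and the integer part of Lemma \ref{tau} (the numerical statement actually proved in its first paragraph) yields
\[
\tau\bigl(d(u_1) + d(u_2)\bigr) \;=\; \tau(d(u_1)) \,\cup\, \tau(d(u_2)).
\]

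Feeding this equality into the description from Proposition \ref{product} identifies the $V$-part of $\Gamma_1 \times_{\varphi_1,\varphi_2} \Gamma_2$ with $\mathrm{Im}(\varphi_1) \cup \mathrm{Im}(\varphi_2)$, and makes $((u_1,u_2), n)$ an edge iff $n \in \tau(d(u_1))$ or $n \in \tau(d(u_2))$; equivalently, $n$ is the $\varphi_i$-label of some neighbour of $u_i$ in $\Gamma_i$. This is precisely the edge relation of $\Gamma_1 \times \Gamma_2$, completing the identification. I do not expect a serious obstacle: the whole argument reduces to the bit-disjointness encoded by Lemma \ref{tau}. The only minor care needed is with isolated $U$-vertices (for which Construction \ref{congraphpol} sets $d(u) = 1$), but one either assumes these do not occur or treats the corresponding $\tau(1)$-summands consistently on both sides, and the same computation then goes through.
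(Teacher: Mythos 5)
Your proposal is correct and follows essentially the same route as the paper: the paper's own proof is a one-line appeal to Lemma \ref{tau}, and you have simply written out in full the intended argument (the description from Proposition \ref{product}, the inclusion $\tau(d(u_i)) \subseteq \mathrm{Im}(\varphi_i)$, and the numerical part of Lemma \ref{tau} giving $\tau(d(u_1)+d(u_2)) = \tau(d(u_1)) \cup \tau(d(u_2))$). Your aside about isolated $U$-vertices is a reasonable precaution about a genuine imprecision in Construction \ref{congraphpol}, not a flaw in your argument.
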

\begin{proof}
  Indeed if $\mathrm{Im}(\varphi_1) \cap \mathrm{Im}(\varphi_2) =\varnothing$ then by Lemma \ref{tau}, the statement follows.
\end{proof}

\subsection{Polynomial Coproduct of Bigraphs} Here we show that sum of two graph polynomials correspondences to a simple operations on labeled bigraphs.

\begin{example}
  Let us consider the following bigraphs $\Gamma_1 = (U_1,V_1,E_1)$, $\Gamma_2 = (U_2,V_2,E_2)$ which are shown below
  \[
  \begin{tikzpicture}[semithick]
	\vertex[fill] (u11) at (0,1) [label=above:$u_{11}$] {};
  \vertex[fill] (u12) at (1,1) [label=above:$u_{12}$] {};
  \vertex[fill] (u21) at (4,1) [label=above:$u_{21}$] {};
  \vertex[fill] (u22) at (5,1) [label=above:$u_{22}$] {};
	\vertex (v11) at (0,0) [label=below:$v_{11}$] {};
	\vertex (v12) at (1,0) [label=below:$v_{12}$] {};
  \vertex (v21) at (4,0) [label=below:$v_{21}$] {};
  \vertex (v22) at (5,0) [label=below:$v_{22}$] {};
	\path
		(u11) edge (v11)
		(u11) edge (v12)
    (u12) edge (v12)
    (u21) edge (v21)
    (u22) edge (v21)
    (u22) edge (v22)
		;
\end{tikzpicture}
\]

Define injections $\varphi_1: V_1 \to \mathbb{N}$, $\varphi_2:V_2 \to \mathbb{N}$ as follows
\[
 \varphi_1(v_{11}) = 0, \quad \varphi(v_{12}) = 1, \quad \varphi(v_{21}) = 1, \quad \varphi(v_{22}) = 2.
\]

We obtain
\begin{align*}
  & d(u_{11}) = 2^0 + 2^1 = 3, \quad d(u_{12}) = 2^1 = 2,\\
  & d(u_{21}) = 2^1, \quad d(u_{22}) = 2^1 + 2^2 = 6,
\end{align*}
hence
\[
 p_1 := p(\Gamma_1,\varphi_1) = x^2 + x^3, \quad p_2:=p(\Gamma_2,\varphi_2) = x^6 + x^2,
\]
then
\[
 p:=p_1 +p_2 = x^6 + x^3 + 2x^2.
\]

Construct $\Gamma(p)$. We get
\[
 I= \{6,3,2\}, \quad \tau(p) = \{0,1,2\}, \quad U = \{u_1^6, u_1^3,u_1^2,u_2^2\}.
\]

Then the graph $\Gamma(p)$ can be imaged as follows
\[
 \begin{tikzpicture}[semithick]
	\vertex[fill] (u1) at (-1,2) [label=above:$u_1^3$] {};
  \vertex[fill] (u2) at (0.5,2) [label=above:$u_1^2$] {};
  \vertex[fill] (u3) at (1.5,2) [label=above:$u_2^2$] {};
  \vertex[fill] (u4) at (3,2) [label=above:$u_1^6$] {};
	\vertex (0) at (0,0) [label=below:$0$] {};
	\vertex (1) at (1,0) [label=below:$1$] {};
  \vertex (2) at (2,0) [label=below:$2$] {};
	\path
		    (u1) edge (0)
        (u1) edge (1)
        (u2) edge (1)
        (u3) edge (1)
        (u4) edge (1)
        (u4) edge (2)
    ;
\end{tikzpicture}
\]

\hfill$\square$
\end{example}

This Example implies to introduce the following

\begin{definition}\label{sumofgraph}
  Let $\Gamma_1 = (U_1,V_1,E_1)$, $\Gamma_2 = (U_2,V_2,E_2)$ be two bigraphs with injections $\varphi_1:V_1 \to \mathbb{N}$, $\varphi_2: V_2 \to \mathbb{N}$. Their \textit{polynomial sum} $\Gamma_1 \sqcup_{\varphi_1,\varphi_2} \Gamma_2$ is called a graph $\Gamma = (U,V,E)$ with $U = U_1 \sqcup U_2$, $V= V_1 \times V_2/\sim$, where $V_1 \ni v_1\sim v_2 \in V_2$ whenever $\varphi_1(v_1) = \varphi_2(v_2)$, and $E = \cup_{u\in U,v\in V}\{(u,v)\}$.
\end{definition}

\begin{proposition}\label{propsumofgraph}
  Let $\Gamma_1 = (U_1,V_1,E_1)$, $\Gamma_2 = (U_2,V_2,E_2)$ be two bigraphs with injections $\varphi_1:V_1 \to \mathbb{N}$, $\varphi_2: V_2 \to \mathbb{N}$. Then
  \[
   \Gamma_1 \bigsqcup_{\varphi_1,\varphi_2} \Gamma_2 \cong \Gamma(p(\Gamma_1,\varphi_2) + p(\Gamma_2,\varphi_2)).
  \]
\end{proposition}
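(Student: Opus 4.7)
The plan is to exhibit an explicit isomorphism $(\beta_U,\beta_V)$ between $\Gamma_1\sqcup_{\varphi_1,\varphi_2}\Gamma_2$ and $\Gamma(p_1+p_2)$, where $p_i:=p(\Gamma_i,\varphi_i)$, and then verify that it preserves adjacency by unwinding the two constructions against each other. (I also interpret $V_1\times V_2/\!\sim$ in Definition \ref{sumofgraph} as $V_1\sqcup V_2/\!\sim$, which is plainly what is meant.)

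First I would compute the sum polynomial: by Construction \ref{congraphpol},
\[
p_1+p_2=\sum_{u\in U_1}x^{d_1(u)}+\sum_{u\in U_2}x^{d_2(u)}=\sum_{u\in U_1\sqcup U_2}x^{d(u)},
\]
where $d(u):=d_i(u)$ when $u\in U_i$. Applying Construction \ref{conpolgraph} to this polynomial produces $\Gamma(p_1+p_2)=(\widetilde V,\widetilde U,\widetilde E)$ with $\widetilde V=\tau(p_1+p_2)$ and $\widetilde U=\bigsqcup_{i}\widetilde U^{\,i}$, where $|\widetilde U^{\,i}|$ equals the coefficient of $x^i$ in $p_1+p_2$, i.e.\ the number of $u\in U_1\sqcup U_2$ with $d(u)=i$.

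Second, I would identify the two vertex parts. On the $V$ side, using the same calculation as in Proposition \ref{=},
\[
\widetilde V=\bigcup_{u\in U_1\sqcup U_2}\tau(d(u))=\mathrm{Im}(\varphi_1)\cup\mathrm{Im}(\varphi_2),
\]
(modulo isolated vertices, which contribute nothing in either construction), while the natural map $V_1\sqcup V_2\to\mathrm{Im}(\varphi_1)\cup\mathrm{Im}(\varphi_2)$ induced by $\varphi_1,\varphi_2$ descends to a bijection $\beta_V:(V_1\sqcup V_2)/\!\sim\;\xrightarrow{\cong}\widetilde V$ precisely because $v_1\sim v_2$ was defined by $\varphi_1(v_1)=\varphi_2(v_2)$. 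On the $U$ side, the coefficient count above means that for each exponent $i$ the fiber of $d:U_1\sqcup U_2\to\mathbb N$ over $i$ has the same cardinality as $\widetilde U^{\,i}$; picking any bijection on each fiber and assembling them gives $\beta_U:U_1\sqcup U_2\xrightarrow{\cong}\widetilde U$.

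Third, I would check adjacency. Fix $u\in U_i$ and $[v]$ with $v\in V_i$. In $\Gamma_1\sqcup_{\varphi_1,\varphi_2}\Gamma_2$ the pair $(u,[v])$ is an edge iff $(u,v)\in E_i$. Under the bijections, $\beta_U(u)\in\widetilde U^{\,d(u)}$ and $\beta_V([v])=\varphi_i(v)$, and in $\Gamma(p_1+p_2)$ the neighbours of $\beta_U(u)$ are exactly $\tau(d(u))=\{\varphi_i(v'):v'\in V_i\cap E_i(u)\}$. Thus $\beta_V([v])$ is a neighbour of $\beta_U(u)$ iff $v\in V_i\cap E_i(u)$, which is the same condition (using injectivity of $\varphi_i$).

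The main obstacle is purely bookkeeping rather than conceptual: when $u_1\in U_1$ and $u_2\in U_2$ satisfy $d_1(u_1)=d_2(u_2)$, their contributions collapse into a single monomial in $p_1+p_2$ whose coefficient has grown by one, and Construction \ref{conpolgraph} then manufactures a fresh indexed family of top vertices over that exponent. The definition of $\beta_U$ must split this family consistently back into $U_1$- and $U_2$-labelled pieces; since the fibre cardinalities match on the nose, any such splitting works, and the adjacency check above does not depend on the splitting chosen. This completes the outline.
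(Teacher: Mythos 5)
Your proof is correct and follows essentially the same route as the paper's own (much terser) argument: expand $p_1+p_2$ over $U_1\sqcup U_2$, identify $\tau(p_1+p_2)$ with $(V_1\sqcup V_2)/\!\sim$ via the injections, match the top vertices fibrewise by coefficient count, and read off adjacency from $\tau(d(u))$. Your version merely makes explicit the bookkeeping (the collapsing of equal exponents across $U_1$ and $U_2$, the isolated-vertex caveat, and the correct reading of $V_1\times V_2/\!\sim$ as a quotient of the disjoint union) that the paper's two-line proof leaves implicit.
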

\begin{proof}
  Indeed, we have
  \[
   p=p(\Gamma_1,\varphi_1) + p(\Gamma_2,\varphi_2) = \sum_{u_1\in U_1}x^{d(u_1)} + \sum_{u_2\in U_2}x^{d(u_2)},
  \]
  where $d(u_i) = \sum_{v_i\in V_i \cap E(u_i)}2^{\varphi_i(v_i)}$, $i =1,2$. Since $V = \tau(p)$: $V = V_1 \times V_2 /\sim$. Further, by Construction \ref{con2}, $U = U_1 \sqcup U_2$, and hence $E =  \cup_{u\in U,v\in V}\{(u,v)\}$, as claimed.
\end{proof}

\begin{corollary}
  If $\mathrm{Im}(\varphi_1) \cap \mathrm{Im}(\varphi_2) = \varnothing$, then $\Gamma_1 \bigsqcup_{\varphi_1,\varphi_2} \Gamma_2 = \Gamma_1 \bigsqcup \Gamma_2$.
\end{corollary}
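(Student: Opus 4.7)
The plan is to argue directly from Definition \ref{sumofgraph}, unpacking the three pieces $U$, $V$, $E$ of $\Gamma_1 \bigsqcup_{\varphi_1,\varphi_2} \Gamma_2$ and checking that under the hypothesis $\mathrm{Im}(\varphi_1)\cap\mathrm{Im}(\varphi_2)=\varnothing$ each piece collapses to the corresponding piece of the ordinary disjoint union $\Gamma_1 \sqcup \Gamma_2$.

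The part $U = U_1\sqcup U_2$ is already identical in both graphs, so nothing needs to be checked there. The only nontrivial point is the vertex set $V = V_1\sqcup V_2/\!\sim$ (reading the definition with the evidently intended ``$\sqcup$''), where $v_1 \sim v_2$ whenever $\varphi_1(v_1) = \varphi_2(v_2)$. The key observation is that the disjointness of the images forces $\sim$ to be the identity relation: an equivalence $v_1 \sim v_2$ with $v_1 \in V_1$ and $v_2 \in V_2$ would exhibit a common element $\varphi_1(v_1) = \varphi_2(v_2) \in \mathrm{Im}(\varphi_1) \cap \mathrm{Im}(\varphi_2)$, contradicting the hypothesis. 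Hence the quotient is trivial and $V = V_1 \sqcup V_2$. With vertices and $U$-parts matching, the incidences $E$ inherited from $E_1$ and $E_2$ (via the definition) are exactly the disjoint union $E_1 \sqcup E_2$, since no edge of $\Gamma_1$ can become incident to a vertex of $\Gamma_2$ after the (trivial) quotient.

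Assembling the three pieces gives $\Gamma_1 \bigsqcup_{\varphi_1,\varphi_2} \Gamma_2 = \Gamma_1 \sqcup \Gamma_2$, as claimed. I expect no real obstacle here: the entire content is to push the disjointness of images through the quotient step in Definition \ref{sumofgraph}. An alternative, slightly more indirect route would be to invoke Proposition \ref{propsumofgraph} and then argue on the polynomial side that $p(\Gamma_1,\varphi_1) + p(\Gamma_2,\varphi_2)$ has exponent-bit supports with $\tau(p_1)\cap\tau(p_2)=\varnothing$, so that $\Gamma(p_1+p_2)$ splits as a disjoint union of $\Gamma(p_1)$ and $\Gamma(p_2)$ by Construction \ref{conpolgraph}; but the direct verification above is shorter and avoids a detour through the polynomial side.
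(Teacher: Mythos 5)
Your proof is correct and is essentially the argument the paper intends: the paper's one-line proof merely cites Proposition \ref{propsumofgraph}, and the substance in either route is exactly your observation that disjointness of $\mathrm{Im}(\varphi_1)$ and $\mathrm{Im}(\varphi_2)$ forces the identification $\sim$ to be the identity, so the quotient collapses to the plain disjoint union of vertex sets and inherited edges. Your reading of the (evidently misprinted) ``$V_1\times V_2/\sim$'' and of $E$ in Definition \ref{sumofgraph} as the disjoint union with inherited incidences is the intended one.
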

\begin{proof}
  The proof immediately follows from the previous Proposition.
\end{proof}

\section{Bipartite Digraph Polynomials}
We start from the following procedure allows to construct a polynomial $p(\vec\Gamma) \in \mathbb{N}[x,y]$  from a given bipartite digraph $\vec \Gamma$.

Given a bipartite digraph $\vec \Gamma = (U,V,E)$, set $\mathstrut^\bullet u: = \{v \in V: (v,u)\in E\}$, and $u^\bullet: = \{v \in V: (u,v) \in E\},$ for any $u \in U$.

\begin{construction}\label{construction1}
Let $\vec \Gamma = (U,V,E)$ be a bipartite digraph and $\varphi:V \to \mathbb{N}$ an injection. Set $p(\vec\Gamma,\varphi):=\sum_{u\in U}x^{d_-(u)}y^{d_+(u)}\in \mathbb{N}[x,y],$ where
 \[
 d_-(u): = \sum_{v \in {\mathstrut^\bullet u}}2^{\varphi(v)},  \quad d_+(u): = \sum_{v \in u^\bullet}2^{\varphi(v)},
 \]
 we put $d_-(u) = 0$ (\textit{reps.} $d_+(u) = 0$) if $\mathstrut^\bullet u = \varnothing$ (\textit{resp.} $u^\bullet = \varnothing$).
\end{construction}

\begin{example}
Let us consider the following bipartite digraph $\vec \Gamma = (U,V,E)$ which is shown below
\[
 \begin{tikzpicture}[> = stealth, 
            shorten > = 1pt, 
            auto,
            node distance = 3cm, 
            semithick 
        ]
	\vertex[fill] (u1) at (0,2) [label=above:$u_1$] {};
  \vertex[fill] (u2) at (2,2) [label=above:$u_2$] {};
  \vertex (0) at (0,0) [label=below:$v_1$] {};
	\vertex (1) at (2,0) [label=below:$v_2$] {};
  \path[->]
		    (u1) edge (0)
        (u1) edge (1)
        (0) edge (u2)
        (1) edge (u2);
 \end{tikzpicture}
\]

Define an injections $\varphi:V \to \mathbb{N}$ as follows: $\varphi(v_1) = 0$, $\varphi(v_2) = 1$. We then get
\begin{align*}
  & d_-(u_1) = 0, \quad d_-(u_2) = 2^0 + 2^1 = 3,\\
  & d_+(u_1) = 2^0 + 2^1 = 3, \quad d_+(u_2) = 0,
\end{align*}
hence $p(\vec \Gamma) =   x^3 + y^3.$
\hfill$\square$
\end{example}

\begin{proposition}
Let $\vec\Gamma = (U,V;A)$, $\vec\Gamma' = (U',V';A')$ be two connected bipartite directed graphs; if there is an isomorphism $(\beta_U,\beta_V): \vec \Gamma \to \vec \Gamma'$, then $p(\vec\Gamma,\varphi) = p(\vec \Gamma', \varphi\circ \beta_V^{-1})$, where $\varphi:V \to \mathbb{N}$ is an arbitrary injection.
\end{proposition}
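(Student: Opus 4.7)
The proof is a direct adaptation of the argument already given for the undirected case, but carried out coordinate by coordinate in $\mathbb{N}[x,y]$: we must show that each monomial $x^{d_-(u)}y^{d_+(u)}$ contributed by a vertex $u\in U$ on the left agrees with the monomial contributed by $\beta_U(u)\in U'$ on the right, using the translated injection $\varphi\circ\beta_V^{-1}$. Summing the matching monomials, reindexed via the bijection $\beta_U$, will then give the desired equality of polynomials.

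First, I would fix $u\in U$ and set $u' := \beta_U(u)$. The key observation is that the defining condition of an isomorphism of bipartite directed graphs, namely $(v,u)\in A \iff (\beta_V(v),\beta_U(u))\in A'$ and symmetrically $(u,v)\in A \iff (\beta_U(u),\beta_V(v))\in A'$, implies that $\beta_V$ restricts to bijections
\[
\beta_V:\mathstrut^\bullet u \xrightarrow{\ \cong\ } \mathstrut^\bullet u',\qquad \beta_V: u^\bullet \xrightarrow{\ \cong\ } (u')^\bullet.
\]
Writing $\varphi' := \varphi\circ \beta_V^{-1}$, we then compute, using the substitution $v' = \beta_V(v)$,
\[
d_-(u) \;=\; \sum_{v\in \mathstrut^\bullet u} 2^{\varphi(v)} \;=\; \sum_{v'\in \mathstrut^\bullet u'} 2^{\varphi(\beta_V^{-1}(v'))} \;=\; \sum_{v'\in \mathstrut^\bullet u'} 2^{\varphi'(v')} \;=\; d_-(u'),
\]
where the right-hand $d_-(u')$ is the one attached to $u'$ in $\vec\Gamma'$ using the injection $\varphi'$. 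An identical computation gives $d_+(u) = d_+(u')$.

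Finally, since $\beta_U: U\to U'$ is a bijection, I would sum the equal monomials over $U$ and reindex by $u' = \beta_U(u)$:
\[
p(\vec\Gamma,\varphi) \;=\; \sum_{u\in U} x^{d_-(u)}y^{d_+(u)} \;=\; \sum_{u'\in U'} x^{d_-(u')}y^{d_+(u')} \;=\; p(\vec\Gamma',\varphi\circ \beta_V^{-1}),
\]
which is exactly the claim.

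There is essentially no obstacle here: the argument is routine once one has the two bijections $\beta_V:\mathstrut^\bullet u\to \mathstrut^\bullet u'$ and $\beta_V: u^\bullet\to (u')^\bullet$ in hand. The only mild subtlety worth stating explicitly is the boundary convention: when $\mathstrut^\bullet u = \varnothing$ (resp.\ $u^\bullet = \varnothing$) we have $d_-(u) = 0$ (resp.\ $d_+(u) = 0$) by Construction~\ref{construction1}, and the bijections above force $\mathstrut^\bullet u' = \varnothing$ (resp.\ $(u')^\bullet = \varnothing$), so the zero exponents match on both sides and the calculation above goes through verbatim.
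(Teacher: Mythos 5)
Your proposal is correct and follows essentially the same route as the paper's own proof: both show, via the change of variables $v' = \beta_V(v)$ induced by the isomorphism, that $d_-(u) = d_-(\beta_U(u))$ and $d_+(u) = d_+(\beta_U(u))$ with respect to $\varphi\circ\beta_V^{-1}$, and then reindex the sum of monomials by the bijection $\beta_U$. Your explicit treatment of the empty pre-set and post-set convention is a small addition the paper omits, but it does not change the argument.
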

\begin{proof}
Let $p(\vec \Gamma,\varphi) = \sum_{u \in U}x^{d_-(u)}y^{d_+(u)}$ and $\beta_V(v) = v'$, $\beta_U(u) =u'$, for every $v\in V$, $u\in U$. We then obtain
\begin{align*}
  & d_-(u) :=  \sum\limits_{v \in \mathstrut^\bullet u} 2^{\varphi(v)} = \sum\limits_{\beta_V^{-1}(v') \in \mathstrut^\bullet \beta_U^{-1}(u')} 2^{\varphi (\beta_V^{-1} (v'))} = \sum\limits_{v' \in \mathstrut^\bullet u'} 2^{\varphi (\beta^{-1}_V(v'))}:=d_{-}(u')\\
  & d_+(u) :=  \sum\limits_{v \in u^\bullet} 2^{\varphi(v)} = \sum\limits_{\beta_V^{-1}(v') \in \beta_U^{-1}(u')^\bullet} 2^{\varphi (\beta^{-1}_V (v'))} = \sum\limits_{v' \in u'^\bullet} 2^{\varphi (\beta^{-1}_V(v'))}:=d_+(u'),
\end{align*}
and hence $p(\vec\Gamma',\varphi\circ \beta_V^{-1}) = \sum_{u' \in U'}x^{d_-(u')}y^{d_+(u')}=p(\vec \Gamma,\varphi)$, as claimed.
\end{proof}

So, we have constructed a polynomial $p\in\mathbb{N}[x,y]$ for every bipartite directed graph and the previous result implies the following

\begin{corollary}
  Two connected bipartite directed graphs $\vec\Gamma = (U,V;A)$, $\vec \Gamma' = (U',V';A')$ are isomorphic if and only if there are injections $\varphi:V \to \mathbb{N}$, $\varphi':V' \to \mathbb{N}$ such that $p(\vec \Gamma,\varphi) = p(\vec \Gamma',\varphi')$.
\end{corollary}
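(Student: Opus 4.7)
The plan is to handle the two directions separately. The forward implication is essentially a restatement of the previous proposition: given an isomorphism $(\beta_U,\beta_V):\vec\Gamma\to\vec\Gamma'$ and any injection $\varphi:V\to\mathbb{N}$, I would take $\varphi':=\varphi\circ\beta_V^{-1}$ and quote the proposition to get $p(\vec\Gamma,\varphi)=p(\vec\Gamma',\varphi')$. The substantive content is in the converse.

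For the converse, I would assume $p(\vec\Gamma,\varphi)=p(\vec\Gamma',\varphi')$ in $\mathbb{N}[x,y]$ and first produce a bijection $\beta_U:U\to U'$ by matching monomials. Since distinct pairs $(a,b)\in\mathbb{N}^2$ give distinct monomials $x^a y^b$, equality of the two polynomials forces, for each $(a,b)$, the identity
\[
|\{u\in U:d_-(u)=a,\ d_+(u)=b\}| = |\{u'\in U':d_-(u')=a,\ d_+(u')=b\}|,
\]
and I can pick any bijection $\beta_U:U\to U'$ respecting this stratification, i.e., $d_-(u)=d_-(\beta_U(u))$ and $d_+(u)=d_+(\beta_U(u))$ for every $u\in U$.

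Next, I would use the injectivity of $\tau$ noted in Definition \ref{definitionoftau} (``$\tau(k)=\tau(k')$ implies $k=k'$'') to lift these numerical equalities to set-theoretic equalities of neighborhoods:
\[
\varphi(\mathstrut^\bullet u)=\varphi'(\mathstrut^\bullet\beta_U(u)),\qquad \varphi(u^\bullet)=\varphi'(\beta_U(u)^\bullet).
\]
Connectedness enters precisely here: it guarantees that every $v\in V$ occurs in some $\mathstrut^\bullet u\cup u^\bullet$, so $\mathrm{Im}(\varphi)\subseteq\mathrm{Im}(\varphi')$; running the same argument through $\beta_U^{-1}$ gives the reverse inclusion, and I can define $\beta_V:=\varphi'^{-1}\circ\varphi:V\to V'$, which is automatically a bijection since $\varphi,\varphi'$ are injections with a common image.

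The last step is to verify that $(\beta_U,\beta_V)$ preserves arcs: if $(u,v)\in A$ then $\varphi(v)\in\varphi(u^\bullet)=\varphi'(\beta_U(u)^\bullet)$, so $\beta_V(v)\in\beta_U(u)^\bullet$, i.e., $(\beta_U(u),\beta_V(v))\in A'$; the reverse implication and the case of incoming arcs follow symmetrically via $\beta_U^{-1},\beta_V^{-1}$. The only mild obstacle I foresee is that $\beta_U$ is not unique when some monomial occurs with multiplicity greater than one, but any valid choice yields the same $\beta_V$, and arc preservation depends only on the match of $\varphi$-values with $\varphi'$-values, so the verification goes through independently of the choice.
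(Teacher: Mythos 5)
Your proof is correct, and it supplies something the paper does not: the paper attaches no argument to this corollary beyond the remark that ``the previous result implies'' it, but that previous proposition only yields the forward direction (isomorphic graphs with matched labellings have equal polynomials). The converse is the substantive half, and your direct construction of $(\beta_U,\beta_V)$ handles it cleanly: matching monomial multiplicities gives $\beta_U$, injectivity of $\tau$ upgrades $d_{\pm}(u)=d_{\pm}(\beta_U(u))$ to the set equalities $\varphi(\mathstrut^\bullet u)=\varphi'(\mathstrut^\bullet\beta_U(u))$ and $\varphi(u^\bullet)=\varphi'(\beta_U(u)^\bullet)$, connectedness ensures $\mathrm{Im}(\varphi)=\mathrm{Im}(\varphi')$ so that $\beta_V=\varphi'^{-1}\circ\varphi$ is a well-defined bijection, and arc preservation is then immediate. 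You also correctly observe that vertices of $U$ in the same stratum are interchangeable (they have identical in- and out-neighborhoods by the same $\tau$-argument), so the non-uniqueness of $\beta_U$ is harmless. The route the paper presumably intends for the converse is to invoke Proposition \ref{Prop=} twice, giving $\vec\Gamma\cong\vec\Gamma(p)\cong\vec\Gamma'$ for the common polynomial $p$; that is shorter but relies on a result stated only later in the paper, whereas your argument is self-contained at the point where the corollary appears. The only caveat worth a passing remark is the degenerate case $U=\varnothing$ (where the polynomial is the empty sum and cannot see $V$), which the paper also ignores.
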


 Now, we want to present an inverse procedure; for a given polynomial $p(x,y) \in \mathbb{N}[x,y]$ we construct a bipartite directed graph $\vec \Gamma(p) = (U(p),V(p);A(p))$ with a natural injection $\iota_p:V(p) \to \mathbb{N}$ such that $p(\Gamma(p),\iota_p) = p(x,y)$.

\begin{construction}\label{con2}
Let $I\subset \mathbb{N} \times \mathbb{N}$ be a finite subset, $p = p(x,y) = \sum_{(i,j) \in I}n_{ij}x^iy^j \in \mathbb{N}[x,y]$ a polynomial. For every $(i,j) \in I$, we consider a set $U_{ij}:=\{u_1^{(i,j)},\ldots,u_{n_{ij}}^{(i,j)} \}$ of some elements. Set $\vec\Gamma(p): = (U(p),V(p);A(p))$, where
\begin{align*}
  & V(p) :=\tau(P):= \bigcup\nolimits_{(i,j) \in I} \{\tau(i), \tau(j)\},\\
  & U(p) = \bigcup\nolimits_{(i,j)\in I}U_{i,j}, \\
  & A(p) := \bigcup\nolimits_{\substack{(i,j) \in I \\ u^{(i,j)} \in U(p)}}\{(u^{(i,j)}, \tau(j)),(\tau(i), u^{(i,j)})\},
\end{align*}
 and putting $\iota(\tau(i)) := \tau(i)$ for any $\tau(i) \in V(p)$ we thus determine \textit{the natural injection $\iota: V(p) \to \mathbb{N}$.}
\end{construction}

\begin{example}
Let us consider the polynomial $p = p(x,y) = x^5y^3 + 2x^2 + y + 2$. Using Construction \ref{con2}, we obtain
\begin{eqnarray*}
V(p) := \tau(p)&=& \tau(5) \cup \tau(3) \cup \tau(2) \cup \tau(1) \cup \tau(0) \\
 &=&\{2,1\} \cup \{1,0\} \cup \{1\} \cup \{0\} \cup \varnothing = \{2,1,0\},
\end{eqnarray*}
because of $5 = 2^2 + 2^1$, $3= 2^1 + 2^0$ and $2 = 2^1$. Next,
\begin{eqnarray*}
U(p) &=& U_{3,3} \cup U_{2,0} \cup U_{0,1} \cup U_{0,0} \\
 &=& \{u_1^{(3,3)} \} \cup \{u_1^{(2,0)}, u_2^{(2,0)} \} \cup \{u_1^{(0,1)} \} \cup \{u_1^{(0,0)} \},
\end{eqnarray*}
and the corresponding graph can be presented as follows:
\[
 \begin{tikzpicture}[> = stealth, 
            shorten > = 1pt, 
            auto,
            node distance = 3cm, 
            semithick 
        ]
	\vertex[fill] (u011) at (-2,2) [label=above:$u^{(0,1)}_1$] {};
  \vertex[fill] (u201) at (0,2)  [label=above:$u^{(2,0)}_2$] {};
  \vertex[fill] (u531) at (2,2)  [label=above:$u^{(5,3)}_1$] {};
  \vertex[fill] (u202) at (4,2)  [label=above:$u^{(2,0)}_2$] {};
  \vertex[fill] (u001) at (6,2)  [label=above:$u^{(0,0)}_1$] {};
  \vertex (0) at (0,0) [label=below:$0$] {};
	\vertex (1) at (2,0) [label=below:$1$] {};
  \vertex (2) at (4,0) [label=below:$2$] {};
  \path[->]
		    (u011) edge (0)
        (u531) edge (1)
        (1) edge (u201)
        (1) edge (u202)
        (2) edge (u531)
                ;
\path[->,bend right]
  (u531) edge (0)
  (0) edge (u531)
  ;
 \end{tikzpicture}
\]
\hfill$\square$
\end{example}

\begin{proposition}\label{Prop=}
Let $\vec \Gamma = (U,V;A)$ be a bipartite digraph and $\varphi:V \to \mathbb{N}$ be an injection. The graphs $\vec\Gamma$ and $\vec \Gamma (p(\vec \Gamma,\varphi))$ are isomorphic.
\end{proposition}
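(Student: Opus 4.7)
The plan is to mirror the proof of Proposition \ref{=}, now tracking the in- and out-arcs of each $u \in U$ simultaneously. Writing $\vec\Gamma(p(\vec\Gamma,\varphi)) = (\widetilde U, \widetilde V; \widetilde A)$, my first step is to identify $\widetilde V$ with $\mathrm{Im}(\varphi)$. Since $\varphi$ is injective and the binary decomposition of a natural number is unique, Definition \ref{definitionoftau} yields $\tau(d_-(u)) = \{\varphi(v) : v \in \mathstrut^\bullet u\}$ and $\tau(d_+(u)) = \{\varphi(v) : v \in u^\bullet\}$; taking the union over $u \in U$ gives $\widetilde V = \mathrm{Im}(\varphi)$, so $\beta_V := \varphi$ is a bijection $V \to \widetilde V$.

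Next I would construct $\beta_U : U \to \widetilde U$. The coefficient $n_{ij}$ of $x^i y^j$ in $p(\vec\Gamma,\varphi)$ equals $|\{u \in U : d_-(u) = i,\ d_+(u) = j\}|$, while Construction \ref{con2} places exactly $n_{ij}$ labelled copies $u^{(i,j)}_k$ in $\widetilde U$. Hence the fibres of the map $u \mapsto (d_-(u), d_+(u))$ on the two sides have matching cardinalities, and I pick any bijection $\beta_U$ that respects these blocks, sending each $u$ to some $u^{(d_-(u),\, d_+(u))}_k$. Arc-preservation is then immediate from Construction \ref{con2}: an arc $(v,u) \in A$ corresponds to $\varphi(v) \in \tau(d_-(u))$, hence to the arc $(\beta_V(v), \beta_U(u)) \in \widetilde A$, and the outgoing case $(u,v) \in A$ is symmetric via $\tau(d_+(u))$. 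Reversing the argument gives the converse inclusion, so $(\beta_U, \beta_V)$ is an isomorphism of bipartite digraphs.

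The main, if modest, obstacle is the bookkeeping caused by collapse and re-expansion when several $u$'s share the same pair $(d_-(u), d_+(u))$: they merge into a single monomial with multiplicity $n_{ij}$, which Construction \ref{con2} then splits back into $n_{ij}$ distinct copies. Once the block cardinalities are matched and $\beta_U$ is chosen accordingly, the identities $\tau(d_-(u)) = \varphi(\mathstrut^\bullet u)$ and $\tau(d_+(u)) = \varphi(u^\bullet)$ make arc-preservation in both directions tautological, and no further computation is needed.
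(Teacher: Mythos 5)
Your proof is correct and follows essentially the same route as the paper's: identify $\widetilde V$ with $\mathrm{Im}(\varphi)$ via $\tau$ and the injectivity of $\varphi$, match $U$ with $\widetilde U$ by counting multiplicities, and read off arc-preservation from Construction \ref{con2}. If anything, your explicit requirement that $\beta_U$ respect the fibres of $u \mapsto (d_-(u), d_+(u))$ is more careful than the paper's argument, which only compares the total cardinalities $|U|$ and $|\widetilde U|$.
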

\begin{proof}
Let $\vec \Gamma (p(\vec \Gamma, \varphi)) = (\widetilde{U},\widetilde{V}; \widetilde{A})$ and $p(\vec \Gamma,\varphi) = \sum_{u\in U}n_{u}x^{d_-(u)}y^{d_+(u)}$.

From Construction \ref{con2} it follows that $\widetilde{V} = \tau(P(\vec \Gamma,\varphi))= \mathrm{Im}(\varphi)$, since the map $\varphi$ is injective we thus get the following bijection $\beta_V=\varphi^{-1}:\widetilde{V} \cong V:\varphi =\beta^{-1}.$

Set $u^\bullet = \{v_{1,u},\ldots,v_{k,u}\}$ and $\mathstrut^\bullet u = \{v_{u,1},\ldots, v_{u,h}\}$ for any $u\in U$. We then can write
\[
 \widetilde{A}:=A(p(\vec \Gamma,\varphi)) = \bigcup\nolimits_{\substack{u\in U \\ 1 \le i \le k \\ 1 \le j \le h}} \{ (\varphi(v_{i,u}),u^{d_+(u)}) , (u^{d_-(u)}, \varphi(v_{u,j}))\}.
\]

Further, by Construction \ref{construction1}, $\sum_{u\in U}n_{u} = |U|$, and by Construction \ref{con2},$|\widetilde{U}| = \sum_{u \in U}n_{u}$. Hence, there exist bijections between the sets $U$ and $\widetilde{U}$ and this completes the proof.
\end{proof}

\section{Algebraic Operations on Bipartite Digraphs}

\subsection{Polynomial Product of Bipartite Digraphs}
In this subsection we introduce a binary operation on labeled bipartite digraphs. We also (as in 3.1) call these operation a \textit{polynomial product} because of it is defined by the product of its polynomial.

We start with examples.

\begin{example}
Let us consider the following bipartite digraphs: $\vec \Gamma_1 = (V_1,E_1)$, $\vec \Gamma_2 = (V_2,E_2)$ $V_1 = \{v_{11}, v_{12}\}$, $U_1 = \{u_{11}, u_{12}\}$, $V_2 = \{v_{21}, v_{22}\}$, $U_2 = \{u_{21}, u_{22}\}$:
\[
  \begin{tikzpicture}[> = stealth, 
            shorten > = 1pt, 
            auto,
            node distance = 3cm, 
            semithick 
        ]
	\vertex[fill] (u11) at (0,1) [label=above:$u_{11}$] {};
  \vertex[fill] (u12) at (1,1) [label=above:$u_{12}$] {};
  \vertex[fill] (u21) at (4,1) [label=above:$u_{21}$] {};
  \vertex[fill] (u22) at (5,1) [label=above:$u_{22}$] {};
	\vertex (v11) at (0,0) [label=below:$v_{11}$] {};
	\vertex (v12) at (1,0) [label=below:$v_{12}$] {};
  \vertex (v21) at (4,0) [label=below:$v_{21}$] {};
  \vertex (v22) at (5,0) [label=below:$v_{22}$] {};
	\path[->]
		(v11) edge (u11)
		(v12) edge (u11)
    (u12) edge (v12)
    (u21) edge (v21)
    (v21) edge (u22)
    (u22) edge (v22)
		;
\end{tikzpicture}
\]

Let $\varphi_1: V_1 \to \mathbb{N}$, $\varphi_2:V_2 \to \mathbb{N}$ be given as follows:
\[
 \varphi_1(v_{11}) = 0, \varphi_1(v_{12}) = 1, \varphi_2(v_{21}) = 2, \varphi_2(v_{22}) = 3.
\]

We get:
\begin{center}
  \begin{tabular}[t]{ll}
     $d_{-}(u_{11}) = 0,$     & $d_+(u_{11}) = 2^0 + 2^1 = 3,$ \\
     $d_{-}(u_{12}) = 2^1=2,$ & $d_+(u_{12}) = 0,$ \\
     $d_{-}(u_{21}) = 2^2=4,$ & $d_+(u_{21}) = 0,$ \\
     $d_{-}(u_{22}) = 2^3=8,$ & $d_+(u_{22}) = 2^2 = 4$,
  \end{tabular}
\end{center}
then
\begin{align*}
 & p(\vec \Gamma_1,\varphi_1) = x^0y^3 + x^2y^0 = y^3 + x^2, \\
 & p(\vec \Gamma_2,\varphi_2) = x^4y^0 + x^8y^4 = x^4 + x^8y^4.
\end{align*}

Further, let $p = p(\vec \Gamma_1,\varphi_1)\cdot p(\vec \Gamma_2,\varphi_1)$,
\[
 p = x^4y^3 + x^8y^7 + x^6 + x^{10}y^4.
\]

Construct $\vec\Gamma(p)$. Using $10 = 2^3 + 2^1$, $8= 2^3$, $7=2^2 + 2^1 + 1^0$, $6 = 2^2 + 2^1$, $4=2^2$, $3 = 2^1 + 2^0$, and $1 = 2^0$, we get $\tau(p) = \{3,2,1,0\}$.

Next,
\begin{align*}
 & I = \{(10,4), (8,7), (4,3), (6,0)  \}, \\
 & U(p) = \{u^{(10,4)}_1, u^{(8,7)}_1, u^{(4,3)}_1, u^{(6,0)}_1 \}.
\end{align*}

We thus get the following graph $\vec \Gamma(p)$
\[
 \begin{tikzpicture}[> = stealth, 
            shorten > = 1pt, 
            auto,
            node distance = 3cm, 
            semithick 
        ]
	\vertex[fill] (u431) at (-2,2) [label=above:$u^{(4,3)}_1$] {};
  \vertex[fill] (u871) at (0,2)  [label=above:$u^{(8,7)}_1$] {};
  \vertex[fill] (u601) at (2,2)  [label=above:$u^{(6,0)}_1$] {};
  \vertex[fill] (u1041) at (4,2)  [label=above:$u^{(10,4)}_1$] {};
  \vertex (0) at (-2,0)[label=below:$0$] {};
	\vertex (1) at (0,0) [label=below:$1$] {};
  \vertex (2) at (2,0) [label=below:$2$] {};
  \vertex (3) at (4,0) [label=below:$3$] {};
  \path[->]
		    (u431) edge (2)
        (0) edge (u431)
        (1) edge (u431)
        (u871) edge (3)
        (1) edge (u871)
        (2) edge (u871)
        (u601) edge (1)
        (u601) edge (2)
        (u1041) edge (1)
        (u1041) edge (3)
        (2) edge (u1041)
                ;
 \end{tikzpicture}
\]

It is not so hard to see that $\vec \Gamma(p) \cong \Gamma_1 \times \Gamma_2$. Indeed, set
\begin{center}
  \begin{tabular}[t]{llll}
    $0 \leftrightarrow v_{11},$ & $1 \leftrightarrow v_{12},$ & $2 \leftrightarrow v_{21},$ & $3 \leftrightarrow v_{22},$ \\
    $u_1^{(4,3)} \leftrightarrow (u_{11}, u_{21}),$ & $u_1^{(8,7)} \leftrightarrow (u_{11}, u_{22}),$ & $u_1^{(6,0)} \leftrightarrow (u_{12}, u_{21}),$ & $u_1^{(10,4)} \leftrightarrow (u_{12}, u_{22}),$
  \end{tabular}
\end{center}
and the isomorphism is clear.

\hfill$\square$
\end{example}

\begin{example}
  Let us consider the same bipartite digraphs as in the previous example: $\vec \Gamma_1 = (V_1,E_1)$, $\vec \Gamma_2 = (V_2,E_2)$ $V_1 = \{v_{11}, v_{12}\}$, $U_1 = \{u_{11}, u_{12}\}$, $V_2 = \{v_{21}, v_{22}\}$, $U_2 = \{u_{21}, u_{22}\}$:
\[
  \begin{tikzpicture}[> = stealth, 
            shorten > = 1pt, 
            auto,
            node distance = 3cm, 
            semithick 
        ]
	\vertex[fill] (u11) at (0,1) [label=above:$u_{11}$] {};
  \vertex[fill] (u12) at (1,1) [label=above:$u_{12}$] {};
  \vertex[fill] (u21) at (4,1) [label=above:$u_{21}$] {};
  \vertex[fill] (u22) at (5,1) [label=above:$u_{22}$] {};
	\vertex (v11) at (0,0) [label=below:$v_{11}$] {};
	\vertex (v12) at (1,0) [label=below:$v_{12}$] {};
  \vertex (v21) at (4,0) [label=below:$v_{21}$] {};
  \vertex (v22) at (5,0) [label=below:$v_{22}$] {};
	\path[->]
		(v11) edge (u11)
		(v12) edge (u11)
    (u12) edge (v12)
    (u21) edge (v21)
    (v21) edge (u22)
    (u22) edge (v22)
		;
\end{tikzpicture}
\]

Let $\varphi_1: V_1 \to \mathbb{N}$, $\varphi_2:V_2 \to \mathbb{N}$ be given as follows:
\[
 \varphi_1(v_{11}) = 0, \varphi_1(v_{12}) = 1, \varphi_2(v_{21}) = 1, \varphi_2(v_{22}) = 2.
\]

We get:
\begin{center}
  \begin{tabular}[t]{ll}
     $d_{-}(u_{11}) = 0,$     & $d_+(u_{11}) = 2^0 + 2^1 = 3,$ \\
     $d_{-}(u_{12}) = 2^1=2,$ & $d_+(u_{12}) = 0,$ \\
     $d_{-}(u_{21}) = 2^1=2,$ & $d_+(u_{21}) = 0,$ \\
     $d_{-}(u_{22}) = 2^2=4,$ & $d_+(u_{22}) = 2^1 = 2$,
  \end{tabular}
\end{center}
then
\begin{align*}
 & p(\vec \Gamma_1,\varphi_1) = x^0y^3 + x^2y^0 = y^3 + x^2, \\
 & p(\vec \Gamma_2,\varphi_2) = x^2y^0 + x^4y^2 = x^2 + x^4y^2.
\end{align*}

Further, let $p = p(\vec \Gamma_1,\varphi_1)p(\vec \Gamma_2,\varphi_1)$,
\[
 p = x^2y^3 + x^4y^5 + x^4 + x^6y^2.
\]

We obtain
\begin{align*}
 & \tau(p) = \{ 2,1,0 \},\\
 & I = \{ (6,2), (4,5), (2,3), (4,0) \}, \\
 & U(p) = \{u_1^{(6,2)}, u_1^{(4,5)}, u_1^{(2,3)}, u_1^{(4,0)}\}.
\end{align*}

Hence, the graph $\vec \Gamma (p)$ has the following form:
\[
 \begin{tikzpicture}[> = stealth, 
            shorten > = 1pt, 
            auto,
            node distance = 3cm, 
            semithick 
        ]
	\vertex[fill] (u231) at (-2,2) [label=above:$u^{(2,3)}_1$] {};
  \vertex[fill] (u451) at (0,2)  [label=above:$u^{(4,5)}_1$] {};
  \vertex[fill] (u401) at (2,2)  [label=above:$u^{(4,0)}_1$] {};
  \vertex[fill] (u621) at (4,2)  [label=above:$u^{(6,2)}_1$] {};
  \vertex (0) at (-1,0)[label=below:$0$] {};
	\vertex (1) at (1,0) [label=below:$1$] {};
  \vertex (2) at (3,0) [label=below:$2$] {};
  \path[->]
		    (0) edge (u451)
        (u451) edge (2)
        (u401) edge (2)
                     ;
  \path[->,bend right]
  (u231) edge (1)
  (1) edge (u231)
  (1) edge (u621)
  (u621) edge (1)
    ;
  \path[->, bend left]
  (0) edge (u231)
  (u621) edge (2)
  ;
 \end{tikzpicture}
\]

\hfill $\square$
\end{example}

As in the case of undirected bigrpaphs, we also see that the graph $\vec\Gamma(p)$ for the previous examples looks like the product $\vec\Gamma_1 \times \vec\Gamma_2$ ``perturbed'' by elements of the set $\mathrm{Im}(\varphi_1) \cap \mathrm{Im}(\varphi_2)$. This example motivates to introduce the following construction.

\begin{definition}
  Let $\vec \Gamma_1 = (U_1,V_1,A_1)$, $\vec \Gamma_2 = (U_2,V_2,A_2)$ be two directed bigraphs, $\varphi_1: V_1 \to \mathbb{N}$, $\varphi_2:V_2 \to \mathbb{N}$ two arbitrary injections. Their \textit{polynomial product} $\vec\Gamma_1 \times_{\varphi_1,\varphi_2}\vec \Gamma_2$, is defined as follows
  \[
   \vec\Gamma_1 \times_{\varphi_1,\varphi_2}\vec \Gamma_2:=\vec\Gamma(p(\vec \Gamma_1,\varphi_1) \cdot p(\Gamma_2,\varphi_2)).
  \]
\end{definition}

On the other hand, using Construction \ref{construction1}, definition of product of polynomials in $\mathbb{N}[x,y]$, and Construction \ref{con2}, we get

\begin{proposition}
  The polynomial product $\vec\Gamma_1 \times_{\varphi_1,\varphi_2}\vec \Gamma_2$ can be described as a directed bigraph $\Gamma = (V,U,A)$ with
  \begin{align*}
    & V = \bigcup_{u_1 \in U_1,u_2 \in U_2} \{\tau(d_{-}(u_1) + d_{-}(u_2))\} \cup \{ \tau ( d_{+}(u_1) + d_{+}(u_2) ) \},\\
    & U = U_1 \times U_2,\\
    & A = \bigcup_{u_1 \in U_1, u_2 \in U_2} \bigcup\nolimits_{\substack{ n \in \tau(d_{-}(u_1) + d_{-}(u_2)) \\ m \in \tau(d_{+}(u_1) + d_{+}(u_2)) }} \{ \bigl((u_1,u_2), n\bigr), \bigl(m, (u_1,u_2)\bigr)\}.
  \end{align*}
\end{proposition}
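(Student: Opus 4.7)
The plan is to chain the two constructions in play, exactly as in the proof of Proposition \ref{product} for the undirected case, but tracking the $x$- and $y$-exponents separately. No new idea is required; the claim is essentially a bookkeeping statement about what monomials arise from multiplying the two bivariate polynomials and how Construction \ref{con2} reads them off as a directed bipartite graph.

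First, I would invoke Construction \ref{construction1} to write
\[
p_i := p(\vec\Gamma_i,\varphi_i) = \sum_{u_i\in U_i} x^{d_-(u_i)}\,y^{d_+(u_i)}, \qquad i=1,2,
\]
and expand the product distributively, obtaining
\[
p_1\cdot p_2 \;=\; \sum_{(u_1,u_2)\in U_1\times U_2} x^{\,d_-(u_1)+d_-(u_2)}\, y^{\,d_+(u_1)+d_+(u_2)}.
\]
Different pairs $(u_1,u_2)$ may yield the same monomial, so after collecting like terms this takes the form $\sum_{(i,j)\in I} n_{ij}x^{i}y^{j}$, where $I$ is the image of $(u_1,u_2)\mapsto\bigl(d_-(u_1)+d_-(u_2),\,d_+(u_1)+d_+(u_2)\bigr)$ and $n_{ij}$ is the number of preimages of $(i,j)$. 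In particular $\sum_{(i,j)\in I}n_{ij} = |U_1|\cdot|U_2|$.

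Second, I would feed $p_1\cdot p_2$ into Construction \ref{con2}. The vertex set $\tau(p_1\cdot p_2) = \bigcup_{(i,j)\in I}\{\tau(i),\tau(j)\}$ is, after reparametrising by the pair $(u_1,u_2)$ producing $(i,j)$, exactly the $V$ in the statement. The upper vertex set $\bigcup_{(i,j)\in I}U_{ij}$ has cardinality $\sum n_{ij} = |U_1|\,|U_2|$, so I can distribute its $n_{ij}$ copies $u^{(i,j)}_k$ bijectively among the $n_{ij}$ pairs $(u_1,u_2)$ contributing to $(i,j)$, yielding an identification $U \leftrightarrow U_1\times U_2$. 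Under this labelling, the two families of arcs $(u^{(i,j)},\tau(j))$ and $(\tau(i),u^{(i,j)})$ prescribed by Construction \ref{con2} become precisely the two families displayed in the claimed set $A$.

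The only point requiring a word is that the bijection $U\leftrightarrow U_1\times U_2$ is not canonical when several pairs $(u_1,u_2)$ collide onto the same exponent pair $(i,j)$. However, all copies $u^{(i,j)}_k$ with the same $(i,j)$ receive identical in- and out-adjacency from Construction \ref{con2}, so any choice of identification respecting $(i,j)$ produces the same directed bigraph up to isomorphism. This is the direct analogue of the corresponding subtlety already handled in Proposition \ref{product}, and I expect it to be the only non-mechanical step in the argument.
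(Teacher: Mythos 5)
Your proposal is correct and follows exactly the route the paper indicates (the sentence preceding the proposition cites Construction \ref{construction1}, the product of polynomials in $\mathbb{N}[x,y]$, and Construction \ref{con2}, and the paper's proof is simply ``straightforward''). Your write-up merely supplies the bookkeeping the paper omits, including the only delicate point — the non-canonical but harmless identification of the $n_{ij}$ copies $u^{(i,j)}_k$ with the colliding pairs $(u_1,u_2)$ — so there is nothing to add.
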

\begin{proof}
  The proof is straightforward.
\end{proof}

\begin{corollary}
  If $\mathrm{Im}(\varphi_1) \cap \mathrm{Im}(\varphi_2) = \varnothing$ then $\vec \Gamma_1 \times_{\varphi_1,\varphi_2}\vec\Gamma_2 = \vec \Gamma_1\times \vec \Gamma_2.$
\end{corollary}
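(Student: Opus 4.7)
The plan is to imitate the undirected-case corollary: apply the proposition immediately preceding the statement and reduce everything to two invocations of Lemma \ref{tau}, one for the ``incoming'' weights $d_-$ and one for the ``outgoing'' weights $d_+$. Concretely, the preceding proposition already presents $\vec\Gamma_1\times_{\varphi_1,\varphi_2}\vec\Gamma_2$ as a bipartite digraph whose $V$-side is built from the sets $\tau(d_-(u_1)+d_-(u_2))$ and $\tau(d_+(u_1)+d_+(u_2))$, and whose arc set is built by joining each $(u_1,u_2)\in U_1\times U_2$ to every $n$ in the first of these sets (incoming to $(u_1,u_2)$) and every $m$ in the second (outgoing from $(u_1,u_2)$). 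So the entire burden is to check that, under the disjoint-image hypothesis, these $\tau$-sets split as disjoint unions.

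Next, I would note that by construction $\tau(d_-(u_i))\subseteq\mathrm{Im}(\varphi_i)$ and $\tau(d_+(u_i))\subseteq\mathrm{Im}(\varphi_i)$ for $i=1,2$. Hence $\mathrm{Im}(\varphi_1)\cap\mathrm{Im}(\varphi_2)=\varnothing$ forces $\tau(d_-(u_1))\cap\tau(d_-(u_2))=\varnothing$ and $\tau(d_+(u_1))\cap\tau(d_+(u_2))=\varnothing$ for every choice of $u_1\in U_1,u_2\in U_2$; that is, the binary supports of $d_-(u_1)$ and $d_-(u_2)$ (resp.\ of $d_+(u_1)$ and $d_+(u_2)$) are disjoint. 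Applying Lemma \ref{tau} twice gives
\[
\tau\bigl(d_-(u_1)+d_-(u_2)\bigr)=\tau(d_-(u_1))\sqcup\tau(d_-(u_2)),\qquad
\tau\bigl(d_+(u_1)+d_+(u_2)\bigr)=\tau(d_+(u_1))\sqcup\tau(d_+(u_2)).
\]

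Finally, I would substitute these decompositions into the formulas from the preceding proposition. The vertex set collapses to $V=\mathrm{Im}(\varphi_1)\sqcup\mathrm{Im}(\varphi_2)$, which, via $\varphi_1^{-1}\sqcup\varphi_2^{-1}$, is in bijection with $V_1\sqcup V_2$. The set $U=U_1\times U_2$ already matches the usual product. Every incoming arc to $(u_1,u_2)$ comes from a unique $n$ lying in either $\tau(d_-(u_1))$ or $\tau(d_-(u_2))$, which under $\varphi_1^{-1}$ or $\varphi_2^{-1}$ is an element of $\mathstrut^\bullet u_1$ or $\mathstrut^\bullet u_2$, precisely reproducing the arcs of the standard bipartite-digraph product $\vec\Gamma_1\times\vec\Gamma_2$; the argument for outgoing arcs is verbatim the same with $d_+$ and $u^\bullet$. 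I expect no genuine obstacle: the only subtlety is the notational one of coherently identifying the natural-number labels with vertices of $V_1$ and $V_2$ through the injections $\varphi_1,\varphi_2$, which is exactly where the assumption $\mathrm{Im}(\varphi_1)\cap\mathrm{Im}(\varphi_2)=\varnothing$ is needed to avoid collapsing distinct vertices.
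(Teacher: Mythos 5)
Your proposal is correct and follows essentially the same route as the paper, which simply states that the claim ``immediately follows by using Lemma \ref{tau}''; you have filled in exactly the details that reduction requires, namely that $\tau(d_\pm(u_i))\subseteq\mathrm{Im}(\varphi_i)$ so the disjoint-image hypothesis lets Lemma \ref{tau} split each of $\tau(d_-(u_1)+d_-(u_2))$ and $\tau(d_+(u_1)+d_+(u_2))$ into a disjoint union, after which the formulas of the preceding proposition visibly coincide with the standard product. No discrepancy with the paper's (much terser) argument.
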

\begin{proof}
  The proof immediately follows by using Lemma \ref{tau}.
\end{proof}

\subsection{Polynomial Coproduct of Directed Bigraphs} Here we show that sum of two directed bipartite graph polynomials correspondences to a simple operations on labeled directed bigraphs.

We start with an Example.

\begin{example}
  Let us turn to the previous example. We have: $\vec \Gamma_1 = (V_1,E_1)$, $\vec \Gamma_2 = (V_2,E_2)$ $V_1 = \{v_{11}, v_{12}\}$, $U_1 = \{u_{11}, u_{12}\}$, $V_2 = \{v_{21}, v_{22}\}$, $U_2 = \{u_{21}, u_{22}\}$:
\[
  \begin{tikzpicture}[> = stealth, 
            shorten > = 1pt, 
            auto,
            node distance = 3cm, 
            semithick 
        ]
	\vertex[fill] (u11) at (0,1) [label=above:$u_{11}$] {};
  \vertex[fill] (u12) at (1,1) [label=above:$u_{12}$] {};
  \vertex[fill] (u21) at (4,1) [label=above:$u_{21}$] {};
  \vertex[fill] (u22) at (5,1) [label=above:$u_{22}$] {};
	\vertex (v11) at (0,0) [label=below:$v_{11}$] {};
	\vertex (v12) at (1,0) [label=below:$v_{12}$] {};
  \vertex (v21) at (4,0) [label=below:$v_{21}$] {};
  \vertex (v22) at (5,0) [label=below:$v_{22}$] {};
	\path[->]
		(v11) edge (u11)
		(v12) edge (u11)
    (u12) edge (v12)
    (u21) edge (v21)
    (v21) edge (u22)
    (u22) edge (v22)
		;
\end{tikzpicture}
\]
and $\varphi_1: V_1 \to \mathbb{N}$, $\varphi_2:V_2 \to \mathbb{N}$ are given as follows:
\[
 \varphi_1(v_{11}) = 0, \varphi_1(v_{12}) = 1, \varphi_2(v_{21}) = 1, \varphi_2(v_{22}) = 2,
\]
and then
\begin{align*}
 & p(\vec \Gamma_1,\varphi_1) = x^0y^3 + x^2y^0 = y^3 + x^2, \\
 & p(\vec \Gamma_2,\varphi_2) = x^2y^0 + x^4y^2 = x^2 + x^4y^2.
\end{align*}

Consider the following polynomial $p:=p(\vec\Gamma_1,\varphi_1) + p(\vec \Gamma_2,\varphi_2)$. We obtain:
\begin{eqnarray*}
  p &=& x^4y^2 + 2x^2 + y^3,\\
  \tau(p) &=& \{\tau(4),\tau(3),\tau(2)\} = \{2,1,0\},\\
  I(p) &=& \{(4,2), (2,0), (0,3)\}, \\
  U(p) &=& \{u_1^{(4,2)}, u_1^{(2,0)}, u_2^{(2,0)}, u_1^{(0,3)}\},
\end{eqnarray*}
and the graph $\vec \Gamma(p)$ has the following form
\[
 \begin{tikzpicture}[> = stealth, 
            shorten > = 1pt, 
            auto,
            node distance = 3cm, 
            semithick 
        ]
	\vertex[fill] (u031) at (-1,2) [label=above:$u_1^{(0,3)}$] {};
  \vertex[fill] (u201) at (0.5,2) [label=above:$u_1^{(2,0)}$] {};
  \vertex[fill] (u202) at (1.5,2) [label=above:$u_2^{(2,0)}$] {};
  \vertex[fill] (u421) at (3,2) [label=above:$u_1^{(4,2)}$] {};
	\vertex (0) at (0,0) [label=below:$0$] {};
	\vertex (1) at (1,0) [label=below:$1$] {};
  \vertex (2) at (2,0) [label=below:$2$] {};
	\path[->]
		    (0) edge (u031)
        (1) edge (u031)
        (u201) edge (1)
        (u202) edge (1)
        (1) edge (u421)
        (u421) edge (2)
    ;
\end{tikzpicture}
\]

\hfill $\square$
\end{example}

Roughly speaking, the graph $\vec\Gamma(p)$ is an attaching the graph $\vec\Gamma_1$ to $\vec \Gamma_2$ by the map $\varphi:=\varphi_1 \sqcup \varphi_2$.

\begin{definition}
  Let $\vec\Gamma_1 = (U_1,V_1,A_1)$, $\vec\Gamma_2 = (U_2,V_2,A_2)$ be two directed bigraphs with injections $\varphi_1:V_1 \to \mathbb{N}$, $\varphi_2: V_2 \to \mathbb{N}$. Their \textit{polynomial sum} $\vec\Gamma_1 \sqcup_{\varphi_1,\varphi_2}\vec \Gamma_2$ is defined as follows
  \[
   \vec\Gamma_1 \sqcup_{\varphi_1,\varphi_2}\vec \Gamma_2:=\vec\Gamma(p(\vec\Gamma_1,\varphi_1) + p(\vec \Gamma_2,\varphi_2)).
  \]
\end{definition}

On the other hand, by Constructions \ref{construction1}, \ref{con2}, we get the following description of this operation.

\begin{proposition}
  $\vec\Gamma = (U,V,A)$, where
  \begin{align*}
    & U = U_1 \sqcup U_2,\\
    & V= V_1 \times V_2/\sim ,\\
    &A  = \cup_{u\in U,v\in V}\{(u,v)\},
  \end{align*}
  here $\sim$ is defined as follows: $v_1 \sim v_2$ whenever  $\varphi_1(v_1) = \varphi_2(v_2)$, for $v_1 \in V_1$, $v_2\in V_2$.
\end{proposition}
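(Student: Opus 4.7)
The plan is to unravel both sides of the claimed equality by applying Constructions \ref{construction1} and \ref{con2} in sequence and then matching vertex sets and arc sets by direct inspection. This mirrors the undirected argument already given in Proposition \ref{propsumofgraph}; the added bookkeeping is the pair of exponents $(d_-(u),d_+(u))$ instead of a single exponent.

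First I would expand, using Construction \ref{construction1},
\[
p := p(\vec\Gamma_1,\varphi_1) + p(\vec\Gamma_2,\varphi_2) = \sum_{u_1 \in U_1} x^{d_-(u_1)} y^{d_+(u_1)} + \sum_{u_2 \in U_2} x^{d_-(u_2)} y^{d_+(u_2)}.
\]
Since summing polynomials concatenates their monomial multisets, the multi-index set $I(p)$ of $p$ is the multiset union of $I(p(\vec\Gamma_1,\varphi_1))$ and $I(p(\vec\Gamma_2,\varphi_2))$. Construction \ref{con2} then introduces, for every monomial and every copy in its coefficient, a fresh vertex in $U(p)$; hence $U(p)$ is in natural bijection with $U_1 \sqcup U_2$, giving $U = U_1 \sqcup U_2$.

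Second, I would determine $V(p)$. By Construction \ref{con2}, $V(p) = \tau(p) = \bigcup_{(i,j) \in I(p)}(\tau(i) \cup \tau(j))$. By Construction \ref{construction1}, for each $u_k \in U_k$ we have $\tau(d_-(u_k)) = \varphi_k(\mathstrut^\bullet u_k)$ and $\tau(d_+(u_k)) = \varphi_k(u_k^\bullet)$. Taking unions across $k=1,2$ and over all $u_k$ yields $V(p) = \mathrm{Im}(\varphi_1) \cup \mathrm{Im}(\varphi_2)$. Since each $\varphi_k$ is injective, the natural identification $v_k \leftrightarrow \varphi_k(v_k)$ realizes this union as the quotient of $V_1 \sqcup V_2$ by the relation $v_1 \sim v_2$ iff $\varphi_1(v_1) = \varphi_2(v_2)$, matching the claimed $V$.

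Finally, I would check the arcs. For each $u_k \in U_k$, Construction \ref{con2} adds exactly the arcs $\{(\tau(d_-(u_k)), u_k)\}$ and $\{(u_k, \tau(d_+(u_k)))\}$; via $\varphi_k$ these are precisely the arcs of $\vec\Gamma_k$ transported into $V(p)$. Since the $u_k$ from the two summands are kept distinct in $U(p)$, the arc set $A(p)$ is exactly the disjoint union of the images of $A_1$ and $A_2$ under the quotient map on vertices, which is the description of $A$ in the statement. The one subtlety worth flagging is the multiplicity count in $U(p)$: if a monomial $x^i y^j$ occurs in both $p(\vec\Gamma_1,\varphi_1)$ and $p(\vec\Gamma_2,\varphi_2)$ (or multiple times within one of them), Construction \ref{con2} manufactures one fresh vertex per unit of coefficient, which is exactly what is needed for the disjoint union $U_1 \sqcup U_2$ to be recovered; this is the only place where one has to resist identifying vertices that look superficially the same.
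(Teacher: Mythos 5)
Your proposal is correct and follows the same route the paper intends: the paper's proof is simply the remark that the claim follows by unwinding Constructions \ref{construction1} and \ref{con2}, and you carry out exactly that unwinding, including the one genuinely delicate point (that coefficients add when monomials coincide, so $U(p)$ recovers the disjoint union $U_1 \sqcup U_2$ rather than a merged set). No gaps; your version is just the fully written-out form of the paper's one-line argument.
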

\begin{proof}
  The proof is straightforward.
\end{proof}

\begin{corollary}
  If $\mathrm{Im}(\varphi_1) \cap \mathrm{Im}(\varphi_2) = \varnothing$, then $\vec\Gamma_1 \bigsqcup_{\varphi_1,\varphi_2} \vec\Gamma_2 = \vec\Gamma_1 \bigsqcup \vec\Gamma_2$.
\end{corollary}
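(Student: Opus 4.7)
The plan is to invoke the preceding proposition directly and then check that the hypothesis makes the gluing relation $\sim$ trivial. By that proposition, $\vec\Gamma_1 \sqcup_{\varphi_1,\varphi_2}\vec\Gamma_2$ is the directed bigraph $(U,V,A)$ with $U = U_1\sqcup U_2$, $V = (V_1 \sqcup V_2)/\sim$ where $v_1 \sim v_2$ iff $\varphi_1(v_1) = \varphi_2(v_2)$, and arcs inherited from the two summands via Construction \ref{con2}. First I would observe that under $\mathrm{Im}(\varphi_1) \cap \mathrm{Im}(\varphi_2) = \varnothing$ together with the injectivity of $\varphi_1$ and $\varphi_2$, the condition $\varphi_1(v_1) = \varphi_2(v_2)$ never holds for any pair $(v_1,v_2)\in V_1\times V_2$; hence $\sim$ collapses to the identity and $V = V_1 \sqcup V_2$. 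Since $U$ is already the disjoint union $U_1 \sqcup U_2$, it remains only to identify the arcs.

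To identify the arcs, I would pass through the polynomial side. Write $p_i = p(\vec\Gamma_i,\varphi_i)$. Every monomial $x^{d_-(u)}y^{d_+(u)}$ appearing in $p_i$ has exponents whose binary supports lie in $\mathrm{Im}(\varphi_i)$, so $\tau(p_1) \subseteq \mathrm{Im}(\varphi_1)$ and $\tau(p_2) \subseteq \mathrm{Im}(\varphi_2)$ are disjoint. Consequently each monomial of $p_1+p_2$ originates unambiguously from exactly one of the two summands, and Construction \ref{con2} applied to $p_1+p_2$ produces two vertex-disjoint sub-bigraphs, each obtained by applying Construction \ref{con2} separately to $p_1$ and to $p_2$. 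By Proposition \ref{Prop=}, those two pieces are isomorphic to $\vec\Gamma_1$ and $\vec\Gamma_2$ respectively, which yields $\vec\Gamma(p_1+p_2) = \vec\Gamma_1 \sqcup \vec\Gamma_2$.

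The argument is essentially an unpacking of the definitions, and I expect no serious obstacle beyond one bookkeeping point: one must verify that the ``attaching'' arcs created by the quotient $\sim$ in the previous proposition really disappear in this degenerate case. This is precisely what the disjointness of $\tau(p_1)$ and $\tau(p_2)$ guarantees, since an arc in $\vec\Gamma(p_1+p_2)$ between a vertex $\tau(i)$ and a unit $u^{(i,j)}_k$ (or symmetrically with $\tau(j)$) is present only when the index comes from a monomial of either $p_1$ or $p_2$, and such monomials are sorted cleanly by which image the exponents live in.
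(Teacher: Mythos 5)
Your argument is correct and takes essentially the same route as the paper, whose proof is a one-line appeal to Lemma \ref{tau}: disjointness of $\mathrm{Im}(\varphi_1)$ and $\mathrm{Im}(\varphi_2)$ makes the gluing relation $\sim$ trivial and keeps $\tau(p_1)$ and $\tau(p_2)$ disjoint, so Construction \ref{con2} applied to $p_1+p_2$ splits into the two separate pieces. Your write-up is just a more explicit unpacking; the only bookkeeping point worth flagging is that monomials of $p_1$ and $p_2$ can coincide when both are the constant $x^0y^0$ (their coefficients then add in $p_1+p_2$), but these correspond to isolated vertices of $U$, so the disjoint-union conclusion is unaffected.
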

\begin{proof}
  The proof immediately follows from Lemma \ref{tau}.
\end{proof}

\section{Dividing of polynomials with coefficients in natural numbers}

In this section we discuss a criteria of dividing polynomials semirings $\mathbb{N}[x]$, $\mathbb{N}[x,y]$ via bipartite graphs.

We consider only undirected bipartite graphs (\textit{i.e.,} we consider dividing in the semiring $\mathbb{N}[x]$), but analogous results (for $\mathbb{N}[x,y]$), using Section 4,5, one can easy obtain for directed bipartite graphs.

As we have already known any (directed) bipartite graph can be considered as a polynomial $p\in \mathbb{N}[x]$ (\textit{resp.} $\in \mathbb{N}[x,y]$) and vise verse. Thus, if we take a polynomial, say, $p\in \mathbb{N}[x]$ then $p$ is irreducible if and only if the graph $\Gamma(p)$ cannot be presented as a polynomial product $\Gamma_1\times_{\varphi_1,\varphi_2} \Gamma_2$, of two graphs $\Gamma_1 = (U_1,V_1,E_1)$, $\Gamma_2=(U_2,V_2,E_2)$ with $E_1,E_2 \ne \varnothing$ and where $\varphi_1:V_1 \to \mathbb{N}$, $\varphi_2: V_2 \to \mathbb{N}$ are suitable injections.

To be more precisely let us introduce the following concept.

\begin{definition}
  A (directed) bipartite graph $\Gamma = (U,V,E)$ is called \textit{irreducible} if there is no an injection $\varphi:V \to \mathbb{N}$, and there are no two graphs $\Gamma_1 = (U_1,V_1,E_1)$, $\Gamma_2 = (U_2,V_2,E_2)$ with $E_1,E_2 \ne \varnothing$, such that $\Gamma = \Gamma_1 \times_{\varphi_1,\varphi_2} \Gamma_2$, where $\varphi_1:=\varphi |_{V_1}$, $\varphi_2:=\varphi|_{V_2}$.

  Otherwise, a graph $\Gamma$ is called \textit{reducible,} and in the case $\Gamma = \Gamma_1 \times_{\varphi_1,\varphi_2} \Gamma_2$, we say that $\Gamma_1$, $\Gamma_2$ are its \textit{polynomial factors.}
\end{definition}

Thus, we get the following criteria of dividing polynomials in $\mathbb{N}$.
\begin{theorem}\label{criteriafordiv}
  A polynomial $p\in\mathbb{N}[x]$ is irreducible if and only if the graph $\Gamma(p)$ is irreducible.

  Moreover, if the graph $\Gamma(p)$ is not irreducible, say, $\Gamma = \Gamma_1 \times_{\iota_1,\iota_2} \Gamma_2$, where $\Gamma_1 = (U_1,V_1,E_1)$, $\Gamma_1 = (U_2,V_2,E_2)$, and $\iota_1:=\iota |_{V_1}$, $\iota_2:=\iota|_{V_2}$, then $p= p(\Gamma_1,\iota_1)\cdot p(\Gamma_2,\iota_2)$ in $\mathbb{N}[x]$.
\end{theorem}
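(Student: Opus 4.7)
The plan is to reduce the whole theorem to a single inverse relation: that Constructions \ref{congraphpol} and \ref{conpolgraph} form a bijective correspondence between polynomials and isomorphism classes of bipartite graphs equipped with a natural injection. One half of this correspondence, $\Gamma \cong \Gamma(P(\Gamma,\varphi))$, is already Proposition \ref{=}. The companion half I would establish first: for every $p = \sum_i n_i x^i \in \mathbb{N}[x]$, the equality $P(\Gamma(p), \iota_p) = p$ holds \emph{on the nose}, where $\iota_p$ is the natural injection. This is an immediate computation from the two constructions, since each $U$-vertex $u_k^i$ of $\Gamma(p)$ satisfies $E(u_k^i) = \tau(i)$, so $d(u_k^i) = \sum_{v \in \tau(i)} 2^{\iota_p(v)} = \sum_{v \in \tau(i)} 2^v = i$, and summing over $k,i$ recovers $p$.

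With this inverse in hand, the ``moreover'' clause is essentially formal. Unfolding Definition \ref{polynprodofgraph}, the hypothesis $\Gamma(p) = \Gamma_1 \times_{\iota_1,\iota_2} \Gamma_2$ reads $\Gamma(p) = \Gamma\bigl(p(\Gamma_1,\iota_1)\cdot p(\Gamma_2,\iota_2)\bigr)$. Applying $P(\cdot,\iota_p)$ to both sides and invoking the inverse property yields
\[
p = P(\Gamma(p),\iota_p) = p(\Gamma_1,\iota_1)\cdot p(\Gamma_2,\iota_2),
\]
exactly the stated factorization in $\mathbb{N}[x]$.

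The two directions of the main equivalence now follow. For $\Gamma(p)$ reducible $\Rightarrow p$ reducible, the ``moreover'' produces $p = p_1 p_2$ with $p_i := p(\Gamma_i,\iota_i)$; the hypothesis $E_i \ne \varnothing$ forces at least one $U$-vertex of $\Gamma_i$ to be non-isolated, contributing a term $x^{d(u)}$ with $d(u) > 0$, so each $p_i$ is non-constant, and the factorization is non-trivial. Conversely, given a non-trivial factorization $p = p_1\cdot p_2$ in $\mathbb{N}[x]$ with both $p_i$ non-constant, I would set $\Gamma_i := \Gamma(p_i)$ with their natural injections $\iota_i$; Definition \ref{polynprodofgraph} then gives directly $\Gamma_1 \times_{\iota_1,\iota_2} \Gamma_2 = \Gamma(p_1 p_2) = \Gamma(p)$, and $E_i \ne \varnothing$ follows again from $p_i$ being non-constant.

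The main obstacle is a bookkeeping subtlety rather than a conceptual one. The definition of reducibility requires a single injection $\varphi$ on the vertex set $V$ of $\Gamma(p)$ whose restrictions are $\iota_1,\iota_2$, but when $\tau(p_1)\cap \tau(p_2) \neq \varnothing$ one has $V = \tau(p_1 p_2) \subsetneq \tau(p_1)\cup \tau(p_2)$ by Lemma \ref{tau}, so the naive choice $V_i := \tau(p_i)$ does not sit inside $V$. I would resolve this by replacing $\Gamma_1,\Gamma_2$ with isomorphic copies whose vertex sets $V_i$ lie in $V$ and whose labels are produced by a single injection $\varphi:V\to \mathbb{N}$ chosen so that Construction \ref{congraphpol} reproduces $p_1$ and $p_2$ from the relabelled factors. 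Since $p(\Gamma_i, \varphi|_{V_i})$ depends only on the isomorphism class of $(\Gamma_i,\varphi|_{V_i})$, the rest of the argument is unaffected by this relabeling.
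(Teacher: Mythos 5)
Your main line of argument --- establishing $P(\Gamma(p),\iota_p)=p$ as the companion to Proposition \ref{=}, deriving the ``moreover'' clause formally from that inverse property, and then reading off both directions of the equivalence --- is exactly the content the paper compresses into its one-sentence proof (``follows from Constructions \ref{conpolgraph}, \ref{congraphpol} and Proposition \ref{product}''), and up to that point your write-up is correct and considerably more explicit than the original. You are also right to flag the bookkeeping problem with the restrictions $\iota_i=\iota|_{V_i}$: the paper's own Example \ref{ex} uses factors whose vertex sets are not subsets of the vertex set of $\Gamma(p)$ and whose labelings collide ($\varphi_1(v_{11})=\varphi_2(v_{21})=0$), so the definition of reducibility cannot be read literally.

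The gap is in your proposed repair. You claim one can always replace $\Gamma_1,\Gamma_2$ by isomorphic copies with $V_1,V_2\subseteq V=\tau(p_1p_2)$ and a single injection $\varphi:V\to\mathbb{N}$ reproducing $p_1$ and $p_2$. Such a $\varphi$ would map $V_1\cup V_2$ injectively onto $\tau(p_1)\cup\tau(p_2)$, forcing $|\tau(p_1p_2)|\ge|\tau(p_1)\cup\tau(p_2)|$, and this fails in general: for $p_1=x^3$, $p_2=x^5$ one has $\tau(p_1)\cup\tau(p_2)=\{0,1,2\}$ while $\tau(x^8)=\{3\}$, so no such relabelled copies exist for this factorization. (The theorem's conclusion survives in this instance only because $x^8$ admits a \emph{different} factorization, $x^4\cdot x^4$, which is realizable with $V_1=V_2=\{3\}$ and $\varphi(3)=2$.) So under the literal definition your converse direction still needs the claim that every reducible $p$ admits at least one non-trivial factorization realizable by a single injection on $\tau(p)$ --- a claim that is not obvious and is proved neither by you nor by the paper. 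The cleaner fix is to read the definition of reducibility the way Example \ref{ex} actually uses it, i.e., with independent injections $\varphi_1,\varphi_2$ whose images may intersect, rather than as restrictions of one injection on $V$; with that reading your original argument, taking $\Gamma_i:=\Gamma(p_i)$ with their natural injections, closes the converse with no relabelling at all. (A minor separate point: you should also say what ``irreducible'' means for constants, since $p=2x$ is reducible in $\mathbb{N}[x]$ only via a constant factor, whose graph has empty edge set and is thus excluded by the condition $E_i\ne\varnothing$.)
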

\begin{proof}
  The proof is immediately follows from Construction \ref{conpolgraph}, Construction \ref{congraphpol}, and Proposition \ref{product}.
\end{proof}

Let us demonstrate this idea in the following Example.

\begin{example}\label{ex}
  Let us consider a polynomial $p = x^3 + 2x^2 + 2x + 1$. Using Construction \ref{conpolgraph}, we have $3 = 2^1 +2^0$, $2 = 2^1$, $1 = 2^0$, hence $\tau(p) = \{0,1\}$. Further, $I(p) = \{3,2,1,0\}$, and then $U(p) = \{u_1^3, u_1^2,u_2^2, u_1^1,u_2^1, u_1^\varnothing\}$, and therefore we get the following graph $\Gamma(p):$

\[
 \begin{tikzpicture}[semithick]
  \vertex[fill] (u31) at (-3,2) [label=above:$u_1^3$] {};
	\vertex[fill] (u21) at (-1,2) [label=above:$u_1^2$] {};
  \vertex[fill] (u22) at (1,2) [label=above:$u^2_2$] {};
  \vertex[fill] (u11) at (3,2) [label=above:$u^1_1$] {};
  \vertex[fill] (u12) at (5,2) [label=above:$u^1_2$] {};
  \vertex[fill] (u01) at (7,2) [label=above:$u^\varnothing_1$] {};
	\vertex (0) at (0,0) [label=below:$0$] {};
	\vertex (1) at (3,0) [label=below:$1$] {};
	\path
		    (u31) edge (0)
        (u31) edge (1)
        (u21) edge (1)
        (u22) edge (1)
        (u11) edge (0)
        (u12) edge (0)
        ;
\end{tikzpicture}
\]

Let us consider now the following graphs $\Gamma_1 = (\{a,b\},\{v_{11}\},E_1),$ and $\Gamma_2= (\{c,d,e\},\{v_{21},v_{22}\},E_2)$:
\[
 \begin{tikzpicture}[semithick]
	\vertex[fill] (u11) at (0,1) [label=above:$a$] {};
  \vertex[fill] (u12) at (1,1) [label=above:$b$] {};
  \vertex[fill] (u21) at (4,1) [label=above:$c$] {};
  \vertex[fill] (u22) at (5,1) [label=above:$d$] {};
  \vertex[fill] (u23) at (6,1) [label=above:$e$] {};
	\vertex (v11) at (0.5,0) [label=below:$v_{11}$] {};
	\vertex (v21) at (4.5,0) [label=below:$v_{21}$] {};
  \vertex (v22) at (5.5,0) [label=below:$v_{22}$] {};
	\path
		(u11) edge (v11)
		(u21) edge (v22)
    (u22) edge (v21)
    		;
\end{tikzpicture}
\]
and set
$\varphi_1(v_{11}) = \varphi_2(v_{21}) = 0$, $\varphi_2(v_{22}) = 1$. One can easy verify that $\Gamma(p) = \Gamma_1\times_{\varphi_1,\varphi_2}\Gamma_2$. Indeed, by
\begin{align*}
 &u_1^3 \leftrightarrow (a,c), \qquad u^2_1 \leftrightarrow (a,d), \\
 &u_2^2 \leftrightarrow (b,c), \qquad u^1_1 \leftrightarrow (a,e), \\
 &u_2^1 \leftrightarrow (b,d), \qquad u^\varnothing_1 \leftrightarrow (b,e),
\end{align*}
the isomorphism is clear.

Further, by Construction \ref{congraphpol}, $p(\Gamma_1,\varphi_1) = x+1$, $p(\Gamma_2,\varphi_2) = x^2+x+1$. Hence
\[
 p=x^3 + 2x^2 + 2x + 1 = (x+1)(x^2+x+1).
\]

\hfill$\square$
\end{example}

\begin{remark}
  We have seen that this point of view on dividing in semerings $\mathbb{N}[x]$, $\mathbb{N}[x,y]$ looks interesting, and the authors are going to study these problems (to simplify the criteria) in the future papers.

  \hfill$\square$
\end{remark}

\section{The Zariski Topology on Bipartite Graphs}

As well known, every commutative (semi)ring can be endowed with the Zariski topology \cite{G}.

Namely, as in the case of (associative commutative) rings, we can introduce ideals of semirings. An {\it ideal} $I$ of a semiring $R$ is a nonempty subset of $R$ satisfying the following conditions: (1) if $a,b \in I$ then $a+b \in I$; (2) if $a \in I$ and $r\in R$ then $ra\in I$; (3) $I \ne R$.

Let $R$ be an a (commutative) semiring with unit, an ideal $\mathfrak{p} \subset R$ is called prime if and only if whenever $a\cdot b \in \mathfrak{p}$, for $a,b \in R$, we must have either $a \in \mathfrak{p}$ or $b \in \mathfrak{p}.$ The {\it spectrum} of $R$, denoted $\mathrm{Spec}(R)$, is the set of all prime ideals of $R$. The set $\mathrm{Spec}(R)$ can be equipped with the Zariski topology, for which the closed sets are the sets
\[
 Z(I):=\bigl\{\mathfrak{p}\in \mathrm{Spec}(R)\, | \,I\subseteq \mathfrak{p}\bigr\}
\]
where $I$ is an ideal. Then, it is easy to see that:
\begin{itemize}
  \item[(1)] $Z(\sum_{\alpha \in A}I_\alpha ) = \bigcap_{\alpha \in A}Z(I_\alpha)$, for every family $\{I_\alpha\}_{\alpha \in A}$ of ideals of $R$,
  \item[(2)] $Z(I) \cup Z(J) = Z(IJ) = Z(I \cap J)$ for every pair $I,J$ of ideals of $R$.
\end{itemize}

A basis for the Zariski topology can be constructed as follows. For $f\in R$, define $D_f:=\mathrm{Spec}(R)\setminus Z((f))$. Then each $D_f$ is an open subset of $\mathrm{Spec}(R)$, and $\{D_f|f\in R\}$ is a basis for the Zariski topology.

We are now able to present the following

\begin{theorem}\label{general}
  Let $\Gamma$ be a finite (directed) bipartite graph. Let $Z(\Gamma)$ be a set $\{\Gamma'\}$ of all reducible finite (directed) bipartite graphs, such that $\Gamma'$ is a polynomial factor of $\Gamma$. The set $\mathscr{P}_{\Phi} := \bigl\{(\Gamma = (U,V,E),\varphi)\bigr\}$ of all finite (directed) bipartite graphs with fixed injections $\varphi:V \to \mathbb{N}$, can be endowed with a topology (the Zariski topology) which is a collection of the closed subsets $Z(N)$.
\end{theorem}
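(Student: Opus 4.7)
The plan is to transport the classical Zariski topology from $\mathrm{Spec}(\mathbb{N}[x])$ to $\mathscr{P}_\Phi$ via the bijection between labeled bipartite graphs and polynomials built up in Sections~2--3. Concretely, by Constructions \ref{congraphpol} and \ref{conpolgraph} together with Proposition \ref{=}, the assignment $(\Gamma,\varphi)\mapsto p(\Gamma,\varphi)$ is a bijection, modulo graph isomorphism, between $\mathscr{P}_\Phi$ and $\mathbb{N}[x]$; by Theorem \ref{criteriafordiv} this bijection matches irreducible graphs with irreducible polynomials, and hence (using unique factorisation in $\mathbb{N}[x]$) with the nonzero prime ideals of $\mathbb{N}[x]$. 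Write $\Phi$ for this bijection.

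I would then declare the family $\{Z(\Gamma)\}_{\Gamma\in\mathscr{P}_\Phi}$ described in the statement to be a subbase of closed sets of $\mathscr{P}_\Phi$ and, for an arbitrary ideal $N\subseteq\mathbb{N}[x]$, define
\[
Z(N):=\bigcap\nolimits_{f\in N} Z(\Gamma(f)),
\]
which, when $N=(p(\Gamma,\varphi))$ is principal, reduces back to the $Z(\Gamma)$ of the theorem. The Zariski topology on $\mathscr{P}_\Phi$ is then the one whose closed sets are precisely $\{Z(N): N\text{ an ideal of }\mathbb{N}[x]\}$.

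Verification of the topology axioms then reduces, through $\Phi$, to the standard identities in $\mathrm{Spec}(\mathbb{N}[x])$ listed in the preamble of this section: $Z((0))=\mathscr{P}_\Phi$, $Z(\mathbb{N}[x])=\varnothing$, $\bigcap_\alpha Z(N_\alpha) = Z\bigl(\sum_\alpha N_\alpha\bigr)$, and $Z(N_1)\cup Z(N_2) = Z(N_1\cap N_2) = Z(N_1 N_2)$. The main obstacle I expect is the purely notational reconciliation between the statement's definition of $Z(\Gamma)$ (a set of factor graphs parameterised by a single graph) and the ideal-indexed $Z(N)$: specifically, one must verify that every $Z(N)$ is an intersection of sets of the form $Z(\Gamma)$ coming from generators of $N$, and conversely that each $Z(\Gamma)$ equals $Z\bigl((p(\Gamma,\varphi))\bigr)$. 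Once this notational matching is in place, all the topology axioms are formal transcriptions through the bijection $\Phi$ and no additional graph-theoretic input is required.
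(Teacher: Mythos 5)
Your proposal takes essentially the same route as the paper: the paper's proof simply asserts that the topology is transported from the Zariski topology on $\mathrm{Spec}\,\mathbb{N}[x]$ (resp. $\mathrm{Spec}\,\mathbb{N}[x,y]$) via Constructions \ref{congraphpol}, \ref{conpolgraph} (resp. \ref{construction1}, \ref{con2}), which is exactly your bijection $\Phi$ followed by pulling back the closed sets $Z(N)$. Your write-up is in fact more explicit than the paper's one-line argument, particularly in flagging the reconciliation between the graph-indexed sets $Z(\Gamma)$ and the ideal-indexed sets $Z(N)$, which the paper leaves implicit.
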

\begin{proof}
  It is obviously that this topology arises from the Zariski topology on $\mathrm{Spec}\,\mathbb{N}[x]$ in the case undirected bipartite graphs, and on $\mathrm{Spec}\, \mathbb{N}[x,y]$ in the case directed bipartite graphs. Using Constructions \ref{congraphpol}, \ref{conpolgraph} (\textit{resp.} Construction\ref{construction1}, \ref{con2}), we complete the proof.
\end{proof}

\begin{corollary}
  Every irreducible (directed) bipartite graph $\Gamma$ is the closed point, namely $\Gamma$, in the topological space $\mathscr{P}_\Phi$.
\end{corollary}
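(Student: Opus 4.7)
The plan is to translate the statement into polynomial language via Constructions \ref{congraphpol} and \ref{conpolgraph}, reducing the topological claim to a divisibility statement in $\mathbb{N}[x]$ (or $\mathbb{N}[x,y]$ in the directed case) that becomes immediate from Theorem \ref{criteriafordiv} together with the irreducibility assumption.

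First I would observe that, for an irreducible bipartite graph $\Gamma$ with injection $\varphi$, Theorem \ref{criteriafordiv} yields that the associated polynomial $p(\Gamma,\varphi)$ is irreducible in $\mathbb{N}[x]$, so the principal ideal $(p(\Gamma,\varphi))$ is a prime ideal. Under the graph/polynomial identification of Theorem \ref{general}, this prime ideal is exactly the point of $\mathscr{P}_\Phi$ represented by $\Gamma$.

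Second, I would unwind what it means for $\{\Gamma\}$ to be closed in $\mathscr{P}_\Phi$: namely, that there exists a graph $\Gamma_0$ with $Z(\Gamma_0)=\{\Gamma\}$. Taking $\Gamma_0:=\Gamma$ itself and inspecting the definition of $Z(\Gamma)$ from Theorem \ref{general}, a graph $\Gamma'$ lies in $Z(\Gamma)$ precisely when $\Gamma'$ appears as a polynomial factor of $\Gamma$, i.e.\ when $p(\Gamma',\varphi')$ divides $p(\Gamma,\varphi)$ in $\mathbb{N}[x]$. Since $p(\Gamma,\varphi)$ is irreducible and the only unit of $\mathbb{N}[x]$ is $1$, the only such divisor (up to the isomorphism of Proposition \ref{=}) is $p(\Gamma,\varphi)$ itself, corresponding to the graph $\Gamma$. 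Hence $Z(\Gamma)=\{\Gamma\}$, and $\Gamma$ is a closed point. The directed case follows by the same argument using Constructions \ref{construction1}, \ref{con2} and Proposition \ref{Prop=} in place of the undirected analogues.

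The main obstacle I anticipate is pinning down the phrasing of $Z(\Gamma)$ in Theorem \ref{general} so that it matches the classical Zariski closed set $Z((p(\Gamma,\varphi)))\subset\mathrm{Spec}(\mathbb{N}[x])$ under the graph/polynomial correspondence; in particular one must verify that the relation ``$\Gamma'$ is a polynomial factor of $\Gamma$'' coincides exactly with the divisibility $p(\Gamma',\varphi')\mid p(\Gamma,\varphi)$ in $\mathbb{N}[x]$, and that the trivial graph corresponding to the unit polynomial $1$ is handled consistently. Once this identification is fixed, the conclusion is just the semiring-theoretic restatement that an irreducible element of $\mathbb{N}[x]$ admits no nontrivial factorization.
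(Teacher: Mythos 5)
Your proof is essentially correct relative to the paper's framework, but it takes a genuinely different route from the paper's own argument. The paper argues via the classical dictionary ``closed points $=$ maximal ideals'': it invokes Theorem \ref{criteriafordiv} to get that $p(\Gamma,\varphi)$ is irreducible and then asserts that the ideal $(p(\Gamma,\varphi))$ is \emph{maximal}, concluding at once. You instead bypass maximality entirely: you only use that $(p(\Gamma,\varphi))$ is prime, and then verify closedness directly by exhibiting a closed set equal to $\{\Gamma\}$, namely $Z(\Gamma)$ itself, computed via the definition in Theorem \ref{general} (membership in $Z(\Gamma)$ amounts to being a polynomial factor of $\Gamma$, and irreducibility leaves only $\Gamma$ and the trivial unit factor). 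Your route is arguably the safer one: the paper's step ``irreducible $\Rightarrow$ $(p)$ maximal'' is exactly the kind of claim that fails for polynomial (semi)rings over non-fields (compare $(x)\subset(x,2)$ in $\mathbb{Z}[x]$, where $(x)$ is prime but not maximal and hence not a closed point of the spectrum), so avoiding it is a genuine gain. The one point you flag yourself, and which does need care, is the mismatch between the paper's literal wording of $Z(\Gamma)$ in Theorem \ref{general} (the set of \emph{reducible} factors of $\Gamma$ --- under which an irreducible $\Gamma$ would not lie in its own $Z(\Gamma)$) and the divisibility reading you adopt; your proof goes through only under the latter interpretation, which is the one consistent with the classical $Z((f))$. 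Within that reading, your argument is complete and your handling of the unit polynomial $1$ is the right thing to check.
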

\begin{proof}
  As well known, closed points in the Zariski topology of $\mathbb{N}[x]$ correspondence to maximal ideals of $\mathbb{N}[x]$. Since $p$ is undecomposable then from Theorem \ref{criteriafordiv} it follows that the ideal $(p(\Gamma,\varphi))$ is maximal, for an arbitrary injection $\varphi$. This completes the proof.
\end{proof}

\section{Application: Paralyzation of Petri Nets}
Petri nets are a tool for graphical and mathematical simulation, applicable to many systems. The are systems for describing and studying information processing systems that are characterized as being concurrent, asynchronous, distributed, parallel, nondeterministic, and/or stochastic. As a graphical tool, Petri nets can be used as a visual communication aid similar to flow charts, block diagrams, and networks. In addition, tokens are used in these nets to simulate the dynamics and concurrent activities of systems. As far as its being a mathematical tool, it is possible to set up state equations, algebraic equations, and other mathematical models governing the behavior of systems.

G. Winskel in \cite{W87} noticed that Petri nets can be viewed as certain 2-sorted algebras; it allows defining the concept of morphisms for Petri nets as homomorphisms of the corresponding algebras. Thus, the category $\mathbf{PN}$ of Petri nets is defined. The product of two Petri nets is defined in \cite{Win}.

In this section we recall some basic definitions of Petri nets theory. We essentially follow \cite {W87, Win} to define morphisms and product for Petri nets and we thus define Petri net category $\mathbf{PN}.$

\begin{definition}
A Petri net $N$ is a quadruple $(B,E, \mathsf{pre},\mathsf{post})$, where
\begin{itemize}
     \item[(1)] $B$, $E$ are disjoint finite sets of {\it conditions} and {\it events}, respectively,
     \item[(2)] $\mathsf{pre}:E \to 2^B$ is the {\it precondition} map such that $\mathsf{pre}(e)$ is nonempty for all $e \in E$,
     \item[(3)] $\mathsf{post}:E \to 2^B$ is the {\it postcondition} map such that $\mathsf{post}(e)$ is nonempty for all $e \in E$.
\end{itemize}
\end{definition}

Petri nets have a well-known graphical representation in which events are represented as boxes and conditions as circles with directed arcs between them (see fig.\ref{F1}).

\begin{figure}[h!]
\begin{tikzpicture}[node distance=1.3cm,>=stealth',auto]
  \tikzstyle{place}=[circle,thick,draw=blue!75,fill=blue!20,minimum size=6mm]
   \tikzstyle{transition}=[rectangle,thick,draw=black!75,
  			  fill=black!20,minimum size=4mm]
      \node [place, label=above:$b_0$]
                      (w1')                                                {};
    \node [place]     (c1') [below of=w1', label=below:$b_1$]              {};
    \node [place] (s1') [below of=c1',xshift=-7mm, label = left:$b_2$]      {};
    \node [place]
                      (s2') [below of=c1',xshift=7mm, label=right:$b_3$] {};
    \node [place]     (c2') [below of=s1',xshift=7mm, label=above:$b_4$]          {};
    \node [place]
                      (w2') [below of=c2', label=below:$b_5$]          {};
    \node [transition] (e1') [left of=c1'] {$e_1$}
      edge [pre,bend left]                  (w1')
      edge [post]                           (s1')
      edge [pre]                            (s2')
      edge [post]                           (c1');
    \node [transition] (e2') [left of=c2'] {$e_3$}
      edge [pre,bend right]                 (w2')
      edge [post]                           (s1')
      edge [pre]                            (s2')
      edge [post]                           (c2');
    \node [transition] (l1') [right of=c1'] {$e_2$}
      edge [pre]                            (c1')
      edge [pre]                            (s1')
      edge [post]                           (s2')
      edge [post,bend right] node[swap] {} (w1');
    \node [transition] (l2') [right of=c2'] {$e_4$}
      edge [pre]                            (c2')
      edge [pre]                            (s1')
      edge [post]                           (s2')
      edge [post,bend left]  node {}       (w2');
\end{tikzpicture}
\caption{The Petri net $N = (B,E,\mathsf{pre},\mathsf{post})$ is shown, where $B = \{b_0,b_1,b_2,b_3,b_4,b_5\},$ $E = \{e_1,e_2,e_3,e_4\},$ $\mathsf{pre}(e_1) = \{b_0,b_3\}$, $\mathsf{post}(e_1) = \{b_1,b_2\}$, \textit{etc.} }\label{F1}
\end{figure}
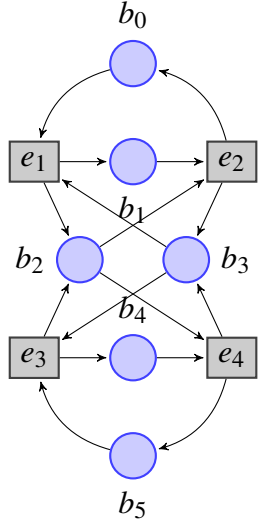

Let $N =(B,E, \mathsf{pre},\mathsf{post})$ be a Petri net with the events $E$. Define $E_*: = E \cup \{*\}$. We extend the $\mathsf{pre}$ and $\mathsf{post}$ condition maps to $*$ by taking $\mathsf{pre}(*) = \varnothing$, $\mathsf{post}(*) = \varnothing$. We also write $\mathstrut^\bullet e$ for the preconditions, $\mathsf{pre}(e)$ and $e^\bullet$ for the postcondition, $\mathsf{post}(e)$, of $e \in E_*$. We write $\mathstrut^\bullet e^\bullet$ for $\mathstrut^\bullet e  \cup e^\bullet$.

\begin{definition}[{\textit{cf.}~\cite{W87, Win}}]
Let $N =(B,E, \mathsf{pre},\mathsf{post})$, $N' =(B',E', \mathsf{pre}',\mathsf{post}')$ be Petri nets. A morphism $(\beta,\eta):N \to N'$ consists of a relation $\beta \subseteq B \times B'$, such that $\beta^{\mathrm{op}}$ (=opposite to $\beta$) is a partial function $B' \to B$, and a partial function $\eta:E \to E'$ such that $\beta (\mathstrut^\bullet e) = \mathstrut^\bullet \eta(e)$, and $\beta(e^\bullet) = \eta(e)^\bullet.$ On the other hand, the diagrams
\[
 \xymatrix{
  E_* \ar@{->}[r]^{\mathsf{pre}} \ar@{->}[d]_{\eta_*} & 2^B \ar@{->}[d]^{2^\beta}\\
  E_*' \ar@{->}[r]_{\mathsf{pre}'}  & 2^{B'}
 }
 \qquad
 \xymatrix{
  E_* \ar@{->}[r]^{\mathsf{post}} \ar@{->}[d]_{\eta_*} & 2^B \ar@{->}[d]^{2^\beta}\\
  E_*' \ar@{->}[r]_{\mathsf{post}'}  & 2^{B'}
 }
\]
are commutative.
\end{definition}

\begin{proposition}[{\cite[Proposition 44]{Win}}]
Nets and their morphisms form a category $\mathbf{PN}$, in which the composition of two morphisms $(\beta,\eta):N\to N'$ and $(\beta',\eta'):N' \to N''$ is $(\beta\circ \beta', \eta \circ \eta'):N \to N''$ (composition in the left component being that of relations and in the right that of partial functions).
\end{proposition}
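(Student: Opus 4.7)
The plan is to verify the three category axioms for $\mathbf{PN}$: the existence of identities, the well-definedness of composition, and associativity. For each Petri net $N = (B, E, \mathsf{pre}, \mathsf{post})$ I would propose as the identity morphism $(\Delta_B, \mathrm{id}_E)$, where $\Delta_B = \{(b,b) : b \in B\}$ is the diagonal relation on $B$ (whose opposite is the identity partial function on $B$) and $\mathrm{id}_E$ is the identity function on $E$. The compatibility conditions $\Delta_B(\mathstrut^\bullet e) = \mathstrut^\bullet e$ and $\Delta_B(e^\bullet) = e^\bullet$ hold trivially, and the two commutative squares of the morphism definition collapse to identity diagrams.

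For the well-definedness of the composite $(\beta \circ \beta', \eta' \circ \eta): N \to N''$, where $\beta \circ \beta' \subseteq B \times B''$ is the usual relation composition in the paper's diagrammatic order, two points need to be checked. First, $(\beta \circ \beta')^{\mathrm{op}} = \beta'^{\mathrm{op}} \circ \beta^{\mathrm{op}}$ is a partial function $B'' \to B$, since the composite of two partial functions is again a partial function. Second, the compatibility conditions extend by applying the condition for $(\beta, \eta)$ followed by the one for $(\beta', \eta')$:
\[
(\beta \circ \beta')(\mathstrut^\bullet e) \;=\; \beta'(\beta(\mathstrut^\bullet e)) \;=\; \beta'(\mathstrut^\bullet \eta(e)) \;=\; \mathstrut^\bullet \eta'(\eta(e)) \;=\; \mathstrut^\bullet (\eta' \circ \eta)(e),
\]
and analogously for the post-conditions. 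The commutativity of the two large squares for the composite morphism then follows by horizontally pasting the corresponding squares for each factor, using the functoriality of the powerset assignment $2^{(-)}$, which yields $2^{\beta \circ \beta'} = 2^{\beta'} \circ 2^\beta$.

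Associativity is inherited componentwise from the associativity of relation composition and of partial function composition; the identity laws reduce to the corresponding laws in those two ambient categories. The one aspect that requires care, and which is in my view the main (mild) obstacle, is the bookkeeping around the extension $E \mapsto E_*$: one must check that $(\eta' \circ \eta)_* = \eta'_* \circ \eta_*$ as partial functions $E_* \to E_*''$, and that the $*$-augmented squares for $\mathsf{pre}$ and $\mathsf{post}$ compose correctly. Both follow at once from the convention $\eta_*(*) = *$ together with $\mathsf{pre}(*) = \mathsf{post}(*) = \varnothing$, which is sent by every $2^\beta$ to $\varnothing$. Since each step reduces to a standard property of relations, partial functions, or the covariant powerset functor, the argument is ultimately routine verification.
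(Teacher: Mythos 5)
Your verification is correct, but note that the paper itself offers no proof of this statement: it is imported verbatim from Winskel--Nielsen as Proposition 44 of \cite{Win}, so there is nothing in the paper to compare your argument against. What you have written is the standard check, and it covers the points that actually need care: that the diagonal relation $\Delta_B$ has the identity partial function as its opposite, that the opposite of a composite relation is the composite of the opposite partial functions, that the conditions $\beta(\mathstrut^\bullet e) = \mathstrut^\bullet \eta(e)$ and $\beta(e^\bullet) = \eta(e)^\bullet$ paste under composition, and --- the one genuinely delicate point --- that the $*$-extension behaves well, i.e.\ $(\eta'\circ\eta)_* = \eta'_*\circ\eta_*$ and $2^{\beta}(\varnothing)=\varnothing$, which is exactly what makes the argument go through when $\eta(e)$ is undefined. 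The only blemish is notational: the paper writes the composite as $(\beta\circ\beta',\eta\circ\eta')$, which only typechecks in diagrammatic order, while you mix diagrammatic order in the relation component with applicative order in the function component ($\eta'\circ\eta$) and write $(\beta\circ\beta')^{\mathrm{op}}=\beta'^{\mathrm{op}}\circ\beta^{\mathrm{op}}$ in a form that again only parses diagrammatically; fixing one consistent convention would make the write-up airtight. Mathematically there is no gap.
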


\begin{remark}[{\bf Isomorphism in the category $\mathbf{PN}$}]\label{Isom}
Recall that a morphism $f:X \to Y$ in a category $\mathscr{C}$ is an isomorphism if it admits a two-sided inverse, meaning that there is another morphism $f^{-1}:Y\to X$ such that $f^{-1}\circ f = \mathbf{id}_X$ and $f\circ f^{-1} = \mathbf{id}_{Y}$, where $\mathbf{id}_X$ and $\mathbf{id}_Y$ are the identity morphisms of $X$ and $Y$, respectively. We use the standard notation $f:X \cong Y$.

Thus, the morphism $(\beta,\eta):N \to N'$ is an isomorphism in the category $\mathbf{PN}$, if there is another morphism $(\beta^{-1},\eta^{-1})$ such that the following diagrams
\[
 \xymatrix{
  & E_* \ar@{->}[rr]^{\mathsf{pre}}  \ar@{->}[ld]_{\eta_*}  \ar@{->}[dd]^(.3){\mathbf{id}_{E_*}} && 2^B  \ar@{->}[ld]_{2^\beta}  \ar@{->}[dd]_{2^{\mathbf{id}_B}}\\
  E'_*  \ar@{->}[rr]_(0.7){\mathsf{pre}'}  \ar@{->}[rd]_{\eta^{-1}_*} && 2^{B'}  \ar@{->}[rd]_(0.4){2^{\beta^{-1}}}& \\
  & E_*  \ar@{->}[rr]_{\mathsf{pre}} && 2^B
 }
 \qquad
  \xymatrix{
  & E_* \ar@{->}[rr]^{\mathsf{post}}  \ar@{->}[ld]_{\eta_*}  \ar@{->}[dd]^(.3){\mathbf{id}_{E_*}} && 2^B  \ar@{->}[ld]_{2^{\beta^{-1}}}  \ar@{->}[dd]_{2^{\mathbf{id}_B}}\\
  E'_*  \ar@{->}[rr]_(0.7){\mathsf{post}'}  \ar@{->}[rd]_{\eta_*^{-1}} && 2^{B'}  \ar@{->}[rd]_(0.4){2^{\beta^{-1}}}& \\
  & E_*  \ar@{->}[rr]_{\mathsf{post}} && 2^B
 }
\]
are commutative. It follows that, in the case when $N$ and $N'$ are finite then an isomorphism between them is a pair $(\beta,\eta)$ of two bijections such that the aforementioned diagrams are commutative.
\end{remark}

In \cite{Win}, it was defined the product of Petri nets.

\begin{figure}[h!]
\begin{center}
\begin{tikzpicture}[node distance=1.3cm,>=stealth',bend angle=25,auto]
  \tikzstyle{place}=[circle,thick,draw=blue!75,fill=blue!20,minimum size=6mm]
  \tikzstyle{red place}=[place,draw=red!75,fill=red!20]
  \tikzstyle{transition}=[rectangle,thick,draw=black!75,
  			  fill=black!20,minimum size=6mm]
  \begin{scope}[yshift=-0.5cm]
   \node [place, label=left:$b_{11}$]         (v1)                                 {};
   \node [transition] (ee1) [above of=v1] {$e$}
     edge [pre, bend left]   (v1)
     edge [post, bend right] (v1);
    \end{scope}
  \begin{scope}[xshift=1.25cm]
    \node(0,0) {$\times$};
  \end{scope}
\begin{scope}[xshift=2.5cm]
 \node [place, label = right:$b_{21}$] (v1')                 {};
 \node [transition] (b) [above of = v1'] {$e_{1}$}
   edge [post] (v1');
 \node [transition] (c) [below of = v1'] {$e_{2}$}
   edge [pre] (v1');
\end{scope}
  \begin{scope}[xshift=4.25cm]
    \node(0,0) {$=$};
  \end{scope}
\begin{scope}[xshift=6cm]
 \node [place, label = left:$b_{11}$] (w0)  {};
 \node[place] (w1) [right of=w0,xshift=3cm,label=right:$b_{21}$] {};
 \node [transition] (a*) [above of = w0] {$(e,*)$}
   edge [pre, bend left] (w0)
   edge [post, bend right] (w0);
 \node[transition] (*b) [above of= w1] {$(*,e_{1})$}
   edge [post] (w1);
 \node [transition] (ab) [left of=*b,xshift=-1cm] {$(e,e_{1})$}
   edge [pre,bend angle = 15, bend left] (w0)
   edge [post,bend angle = 15, bend right] (w0)
   edge [post,bend angle = 15, bend left] (w1);
 \node [transition] (*c) [below of=w1] {$(*,e_{2})$}
   edge [pre] (w1);
 \node [transition] (ac) [left of=*c,xshift=-1cm] {$(e,e_{2})$}
   edge [pre,bend angle = 15, bend right] (w1)
   edge [pre,bend angle = 15, bend left] (w0)
   edge [post,bend angle = 15, bend right] (w0);
 \end{scope}
\end{tikzpicture}
\end{center}
\caption{The product of two Petri nets is shown.}
\end{figure}
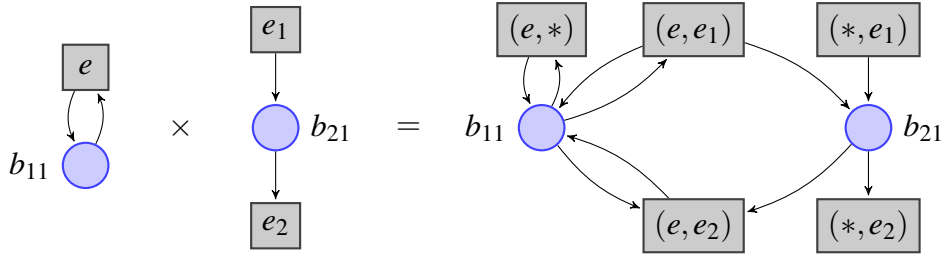

\begin{definition}[{\bf Product of Petri nets}]\label{parallproduct} Let $N_1 =(B_1,E_{1*}, \mathsf{pre}_1,\mathsf{post}_1)$ $N_2 =(B_2,E_{2*}, \mathsf{pre}_2,\mathsf{post}_2)$ be Petri nets. {\it Their product} $N = N_1 \times N_2:= (B,E_*,\mathsf{pre},\mathsf{post});$ it has the events $E_* : = E_{1*} \times E_{2*}$, the product in $\mathbf{Set}_*$ with the projections $\pi_{1*}:E_* \to E_{1*}$ and $\pi_{2*}:E_* \to E_{2*}$. Its conditions have the form $B: = B_1 \sqcup B_2$, the disjoint union of $B_1$ and $B_2$. Define $\rho_1$ to be the opposite relation to the injection $(\rho_1)^{\mathrm{op}}:B_1 \to B$. Define $\rho_2$ similarly. Define the pre and post conditions of an event $e$ in the product in terms of its pre and post conditions in the components by
\begin{align*}
& \mathsf{pre}(e): = (\rho_1)^{\mathrm{op}}[\mathsf{pre}_1(\pi_1(e))] + (\rho_2)^{\mathrm{op}}[\mathsf{pre}_2(\pi_2(e))]\\
& \mathsf{post}(e): = (\rho_1)^{\mathrm{op}}[\mathsf{post}_1(\pi_1(e))] +  (\rho_2)^{\mathrm{op}}[\mathsf{post}_2(\pi_2(e))].
\end{align*}

If either $N_1 \ne N$ or $N_2 \ne N$, then we say that $N_1$ is \textit{the factors of $N$} and $N$ is {\it decomposable}. Otherwise, we say that $N$ is {\it undecomposable}.
\end{definition}

As we have seen any Petri net can be considered as a bipartite directed graphs, hence all above results are true for Petri net. In particular, we get the following criteria of decomposition (=parallalization) of Petri nets.

\begin{theorem}\label{theorem}
Let $(N,\varphi)=\bigl((B,E_*, \mathsf{pre},\mathsf{post}),\varphi\bigr)$ be a Petri net with an injection $\varphi:B \to \mathbb{N}$, let $ P_1, P_2 \in \mathbb{N}[x,y]$ such that $\tau(P_1)\cap \tau(P_2) = \varnothing$. Then $P(N,\varphi) = P_1 \cdot P_2$ if and only if $\mathscr{N}(P_1) \times \mathscr{N}(P_2) \cong N$, in the category $\mathbf{PN}$.

In particular, a Petri net $N = (B,E,\mathsf{pre},\mathsf{post})$ is decomposable if and only if the polynomial $P(N,\varphi)$, for an arbitrary choice of an injection $\varphi:B \to \mathbb{N}$, is decomposable over $\mathbb{N}[x,y]$, i.e., $P(N,\varphi) = P_1 \cdot P_2$ for some $P_1,P_2 \in \mathbb{N}[x,y]$, and $\tau(P_1) \cap \tau(P_2) = \varnothing$.
\end{theorem}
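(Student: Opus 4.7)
The plan is to reduce the theorem to the already-established correspondence between bipartite digraphs and polynomials in $\mathbb{N}[x,y]$, treating a Petri net $N=(B,E,\mathsf{pre},\mathsf{post})$ as the bipartite digraph $\vec\Gamma_N=(E,B;A)$ with $A=\{(b,e):b\in\mathstrut^\bullet e\}\cup\{(e,b):b\in e^\bullet\}$. Under this identification, Construction \ref{construction1} gives $P(N,\varphi)=p(\vec\Gamma_N,\varphi)$, and Construction \ref{con2} produces $\mathscr{N}(P)=\vec\Gamma(P)$. The main conceptual input I need is that Winskel's product $N_1\times N_2$ in $\mathbf{PN}$ coincides, up to the isolated $*$-events, with the ordinary product of the underlying bipartite digraphs: the event set $E_{1*}\times E_{2*}$ equals $U_1\times U_2$ after the isolated events are absorbed, and the disjoint union $B_1\sqcup B_2$ of conditions together with the pre/post conditions spelled out in Definition \ref{parallproduct} is literally the standard bipartite-digraph product $\vec\Gamma_{N_1}\times\vec\Gamma_{N_2}$.

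For the forward direction, assume $P(N,\varphi)=P_1\cdot P_2$ with $\tau(P_1)\cap\tau(P_2)=\varnothing$. I first apply Proposition \ref{Prop=} to obtain $\vec\Gamma_N\cong\vec\Gamma(P(N,\varphi))=\vec\Gamma(P_1P_2)$. By definition, $\vec\Gamma(P_1P_2)=\mathscr{N}(P_1)\times_{\iota_1,\iota_2}\mathscr{N}(P_2)$, where $\iota_1,\iota_2$ are the natural injections from Construction \ref{con2} whose images are $\tau(P_1)$ and $\tau(P_2)$. Since these images are disjoint by hypothesis, the corollary following the polynomial-product proposition (for digraphs) collapses the perturbed product to the ordinary product: $\mathscr{N}(P_1)\times_{\iota_1,\iota_2}\mathscr{N}(P_2)=\mathscr{N}(P_1)\times\mathscr{N}(P_2)$. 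Reading this equality back in the category $\mathbf{PN}$ via the identification above gives $\mathscr{N}(P_1)\times\mathscr{N}(P_2)\cong N$ as Petri nets.

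For the converse, assume $N\cong\mathscr{N}(P_1)\times\mathscr{N}(P_2)$ in $\mathbf{PN}$. Using the $\mathbf{PN}$-product/bipartite-digraph-product identification, this is $\vec\Gamma_N\cong\mathscr{N}(P_1)\times\mathscr{N}(P_2)$, and since $\tau(P_1)\cap\tau(P_2)=\varnothing$ means $\mathrm{Im}(\iota_1)\cap\mathrm{Im}(\iota_2)=\varnothing$, the same corollary rewrites this as the polynomial product $\mathscr{N}(P_1)\times_{\iota_1,\iota_2}\mathscr{N}(P_2)=\vec\Gamma(P_1P_2)$. Applying Construction \ref{construction1} and the uniqueness statement implicit in the isomorphism-$\Leftrightarrow$-equal-polynomial corollary for bipartite digraphs, we obtain $P(N,\varphi)=P_1\cdot P_2$ (for a choice of $\varphi$ compatible with $\iota_1\sqcup\iota_2$; any other injection gives the same factorization up to relabeling of variables thanks to the proposition on isomorphism-invariance of $p(\vec\Gamma,\varphi)$). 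The ``in particular'' part is then immediate by applying Lemma \ref{tau} to convert an arbitrary factorization $P(N,\varphi)=P_1P_2$ into one with $\tau(P_1)\cap\tau(P_2)=\varnothing$, noting that $\tau(P(N,\varphi))=\mathrm{Im}(\varphi)$ so the hypotheses of Lemma \ref{tau} are available.

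The main obstacle I expect is not any single calculation but the bookkeeping around the $*$-event convention and the disjoint union $B_1\sqcup B_2$: Winskel's definition pads event sets with $*$ to make the projections total, while the bipartite-digraph formalism of Sections 4--5 simply drops isolated vertices, and one must verify that the two conventions match exactly (on events and on pre/post relations expressed via $2^\beta$). Once this compatibility lemma is recorded, the theorem reduces cleanly to Lemma \ref{tau}, Proposition \ref{Prop=}, and the product corollary for disjoint-image injections.
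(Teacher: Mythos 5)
The paper states this theorem without any proof at all --- the theorem is followed immediately by a figure and a worked example --- so there is nothing to compare against line by line; but the sentence preceding it (``any Petri net can be considered as a bipartite directed graph, hence all above results are true for Petri nets'') shows that your reduction to Proposition \ref{Prop=}, the polynomial-product proposition for digraphs, and its disjoint-image corollary is exactly the argument the authors have in mind. Your chain $N \cong \vec\Gamma(P(N,\varphi)) = \vec\Gamma(P_1P_2) = \mathscr{N}(P_1)\times_{\iota_1,\iota_2}\mathscr{N}(P_2) = \mathscr{N}(P_1)\times\mathscr{N}(P_2)$ is the right skeleton, and you are also right to hedge on the converse: as literally stated the theorem fixes $\varphi$ in advance, and the equality $P(N,\varphi)=P_1P_2$ only comes out for the injection transported from $\iota_1\sqcup\iota_2$ along the isomorphism; for an unrelated $\varphi$ one only gets a factorization of $P(N,\varphi')$ for some other injection $\varphi'$. (Your appeal to Lemma \ref{tau} for the ``in particular'' clause is not needed and not quite what that lemma says; that clause is just a restatement of the main equivalence, since the disjointness of $\tau(P_1)$ and $\tau(P_2)$ is built into its hypothesis.)

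The one genuine gap is the step you yourself flag as a ``compatibility lemma'' and then do not prove, and it is the only step with real content: that Winskel's product in $\mathbf{PN}$, with its $*$-padded event set $E_{1*}\times E_{2*}$, is computed by multiplication of the polynomials, rather than by the plain digraph product on $U_1\times U_2$. This is not mere bookkeeping to be ``absorbed'': $E_{1*}\times E_{2*}$ has $(|E_1|+1)(|E_2|+1)$ elements while $U_1\times U_2$ need not, and the two only match because (i) the paper's Petri-net polynomial sums over $E_*$, so the $*$ event contributes the monomial $x^0y^0=1$ and every Petri-net polynomial has constant term exactly $1$ (the axioms forbid other events with empty pre- and postconditions), whence the distributive expansion of $P_1\cdot P_2$ enumerates precisely the pairs $(e_1,e_2)\in E_{1*}\times E_{2*}$, the cross terms with $*$ included; and (ii) the hypothesis $\tau(P_1)\cap\tau(P_2)=\varnothing$ together with Lemma \ref{tau} gives $\tau\bigl(d_\pm(e_1)+d_\pm(e_2)\bigr)=\tau\bigl(d_\pm(e_1)\bigr)\sqcup\tau\bigl(d_\pm(e_2)\bigr)$, so the exponent of the product monomial decodes to exactly the pre/post sets $\rho_1^{\mathrm{op}}[\mathsf{pre}_1(\pi_1(e))]+\rho_2^{\mathrm{op}}[\mathsf{pre}_2(\pi_2(e))]$ of Definition \ref{parallproduct} inside $B_1\sqcup B_2$. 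You should also note that the resulting bijections on conditions and events preserve pre/post and fix $*$, so they constitute an isomorphism in $\mathbf{PN}$ in the sense of Remark \ref{Isom}, not merely a graph isomorphism. With that lemma written out, your proof closes; without it, the theorem's actual content is still assumed rather than proved.
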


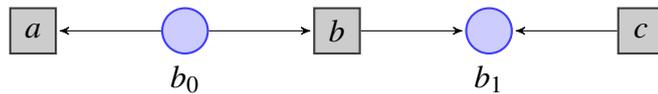
\begin{figure}[h!]
\begin{center}
\begin{tikzpicture}[node distance=1.3cm,>=stealth',bend angle=25,auto]
  \tikzstyle{place}=[circle,thick,draw=blue!75,fill=blue!20,minimum size=6mm]
  \tikzstyle{red place}=[place,draw=red!75,fill=red!20]
  \tikzstyle{transition}=[rectangle,thick,draw=black!75,
  			  fill=black!20,minimum size=6mm]
  \node[place,label=below:$b_0$] (0) at (0,0) {};
  \node[place,label=below:$b_1$] (1) at (4,0) {};
  \node[transition] (a) at (-2,0) {$a$} edge [pre] (0);
  \node[transition] (b) at (2,0) {$b$} edge [pre] (0) edge[post] (1);
  \node[transition] (c) at (6,0) {$c$} edge [post] (1);
  \end{tikzpicture}
\end{center}
\caption{The Petri net $N$.}\label{parall(1)}
\end{figure}

\begin{example}
Let us consider the following Petri net $N$ which is shown in Fig.\ref{parall(1)}.

Set $\varphi(b_0) = 0$ and $\varphi(b_1) = 1$. Then $a \mapsto x^{2^0} = x$, $b \mapsto x^{2^0}y^{2^1} = xy^2,$ $ \mapsto y^{2^1} = y^2$ and $ * \mapsto 1$, and we get $P(N,\varphi) = x + xy^2 + y^2+1$.

It is not hard to see that $P(N,\varphi) = x + xy^2 + y^2+1 = (x+1)(y^2+1)$. Let $P_1 = x+1$ and $P_2 = y^2+1$. It is clear that $\tau(P_1) \cap \tau(P_2) = \varnothing.$ By Construction \ref{con2}, we get the Petri nets $\mathscr{N}(P_1)$ and $\mathscr{N}(P_2)$ (see fig.\ref{parall(2)}). It is easy to see that $N = \mathscr{N}(P_1) \times \mathscr{N}(P_2)$.

\hfill$\square$
\end{example}

\begin{figure}[h!]
\begin{center}
\begin{tikzpicture}[node distance=1.3cm,>=stealth',bend angle=25,auto]
  \tikzstyle{place}=[circle,thick,draw=blue!75,fill=blue!20,minimum size=6mm]
  \tikzstyle{red place}=[place,draw=red!75,fill=red!20]
  \tikzstyle{transition}=[rectangle,thick,draw=black!75,
  			  fill=black!20,minimum size=6mm]
  \node[place,label=right:$0$] (0) at (0,0) {};
  \node[place,label=right:$1$] (1) at (4,0) {};
  \node[transition] (e1) at (0,-1.5) {$e_1$} edge [pre] (0);
  \node[transition] (e2) at (4,-1.5) {$e_2$} edge [post] (1);
  \end{tikzpicture}
\end{center}
\caption{Two Petri nets $\mathscr{N}(P_1)$ (at left) and $\mathscr{N}(P_1)$ (at right) that correspond to the polynomials $P_1 = x+1$ and $P_2 = y^2+1$, respectively.}\label{parall(2)}\end{figure}
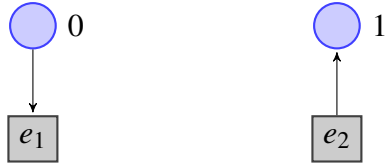

\end{document}